\documentclass[leqno]{article}

\usepackage[frenchb,english]{babel}
\usepackage[utf8]{inputenc}
\usepackage{amsmath}
\usepackage{amssymb}
\usepackage{amsfonts}
\usepackage{enumerate}
\usepackage{vmargin}
\usepackage[all]{xy}
\usepackage{mathrsfs}
\usepackage{mathtools}
\usepackage{lmodern}
\usepackage{slashed}
\usepackage[colorlinks=true,linkcolor=blue,pagebackref=true]{hyperref}%
\setmarginsrb{3cm}{3cm}{3.5cm}{3cm}{0cm}{0cm}{1.5cm}{3cm}
\usepackage{comment}

\usepackage{accents}

\newlength{\dhatheight}

\newcommand{\E}{\ensuremath{\mathbb{E}}}

\newcommand{\B}{\mathrm{B}} 

\let\L\relax 
\newcommand{\L}{\mathrm{L}}

\newcommand{\Mult}{\mathrm{Mult}}

\newcommand{\M}{\mathrm{M}}

\let\cal\relax
\newcommand{\cal}{\mathcal}

\newcommand{\Zc}{\mathrm{Z}}

\newcommand{\W}{\mathrm{W}}

\newcommand{\Id}{\mathrm{Id}}
\newcommand{\la}{\langle}
\newcommand{\ra}{\rangle}

\renewcommand{\leq}{\ensuremath{\leqslant}}
\renewcommand{\geq}{\ensuremath{\geqslant}}
\newcommand{\qed}{\hfill \vrule height6pt  width6pt depth0pt}

\newcommand{\norm}[1]{\left\Vert#1\right\Vert}

\newcommand{\co}{\colon}

\newcommand{\ot}{\otimes}
\newcommand{\ovl}{\overline}
\newcommand{\otvn}{\ovl\ot}

\newcommand{\dec}{\mathrm{dec}}

\newcommand{\TRO}{\mathrm{TRO}}

\newcommand{\ov}{\overset}

\newcommand{\w}{\mathrm{w}} 

\DeclareMathOperator{\Span}{span} 
\let\ker\relax 
\DeclareMathOperator{\ker}{Ker} 
\DeclareMathOperator{\Ran}{Ran} 

\newtheorem{thm}{Theorem}[section]

\newtheorem{prop}[thm]{Proposition}

\newtheorem{cor}[thm]{Corollary}
\newtheorem{lemma}[thm]{Lemma}

\newtheorem{remark}[thm]{Remark}
\newtheorem{example}[thm]{Example}

\newenvironment{proof}[1][]{\noindent {\it Proof #1} : }{\hbox{~}\qed
\smallskip
}

\usepackage{tocloft}
\numberwithin{equation}{section}
\usepackage[nottoc,notlot,notlof]{tocbibind}

\let\OLDthebibliography\thebibliography
\renewcommand\thebibliography[1]{
  \OLDthebibliography{#1}
  \setlength{\parskip}{0pt}
  \setlength{\itemsep}{0pt plus 0.3ex}
}

\newcommand\reallywidehat[1]{\arraycolsep=0pt\relax%
\begin{array}{c}
\stretchto{
  \scaleto{
    \scalerel*[\widthof{\ensuremath{#1}}]{\kern-.5pt\bigwedge\kern-.5pt}
    {\rule[-\textheight/2]{1ex}{\textheight}} 
  }{\textheight} %
}{0.5ex}\\           
#1\\                 
\rule{-1ex}{0ex}
\end{array}
}

\begin{document}
\selectlanguage{english}
\title{\bfseries{Completely isometric subspaces of noncommutative $\L^p$-spaces and contractive projections}}
\date{}
\author{\bfseries{C\'edric Arhancet}}%
\maketitle


\begin{abstract}
We investigate the relation between the complete isometry class of subspaces of noncommutative $\mathrm{L}^p$-spaces and their contractive complementability, where $1 < p < \infty$ with $p \not= 2$. We show that if $P \co \mathrm{L}^p(\mathcal{M}) \to \mathrm{L}^p(\mathcal{M})$ is a positive contractive projection whose range is completely isometric to another noncommutative $\mathrm{L}^p$-space, then $P$ is necessarily completely positive. This provides a converse to the main result of \cite{ArR24} and the positivity assumption on $P$ is essential. We further establish a rectangular analogue of this result. More precisely, we prove that every closed subspace of a noncommutative $\mathrm{L}^p$-space which is completely isometric to a rectangular noncommutative $\mathrm{L}^p$-space of the form $e\mathrm{L}^p(\mathcal{N})(1-e)$ is the range of a contractively decomposable projection. Combined with the known converse implication, this yields a characterization of the ranges of contractively decomposable projections as precisely the subspaces completely isometric to rectangular $\mathrm{L}^p$-spaces associated with $\mathrm{W}^*$-ternary rings of operators.
\end{abstract}


\makeatletter
 \renewcommand{\@makefntext}[1]{#1}
 \makeatother
 \footnotetext{
 2020 {\it Mathematics subject classification:}
 46L51, 46L07. 
\\
{\it Key words}: noncommutative $\L^p$-spaces, contractive projections, isometries, ternary rings of operators.}

{
  \hypersetup{linkcolor=blue}
 \tableofcontents
}

\section{Introduction}
\label{sec:Introduction}

The study of contractive projections and contractively complemented subspaces is a classical topic in Banach space theory. A fundamental result of Douglas and Ando says that, for $1\leq p<\infty$ with $p\not=2$, a closed subspace of a classical $\L^p$-space is contractively complemented if and only if it is isometrically isomorphic to an $\L^p$-space, see \cite{Dou65,And66} and also \cite{Tza69,BeL74} for extensions to more general measure spaces. Thus, in the commutative setting, the isometric structure of a subspace determines whether it is the range of a contractive projection. Since $\L^p$-spaces are smooth for $1 < p < \infty$, such a projection is in addition uniquely determined by its range by \cite[Theorem~6]{CoS70} (see Proposition \ref{prop-Sullivan}).

The corresponding problem for noncommutative $\L^p$-spaces is considerably more involved. Even for Schatten spaces, the ranges of contractive projections need not be isometric to noncommutative $\L^p$-spaces. The work of Arazy and Friedman \cite{ArF78,ArF92} gives a complete description of contractively complemented subspaces of Schatten spaces in terms of Cartan factors. From the operator space point of view, a more rigid picture emerges when matricial assumptions are imposed. In particular, Le Merdy, Ricard and Roydor characterized completely contractively complemented subspaces of Schatten spaces in \cite{LRR09}. For $p=1$, Ng and Ozawa proved a particularly suggestive result: a subspace of the predual of a $\mathrm{W}^*$-TRO is completely contractively complemented if and only if it is completely isometric to the predual of a $\mathrm{W}^*$-$\TRO$, see \cite{NgO02}. Recall that a $\mathrm{W}^*$-ternary ring of operators, or $\mathrm{W}^*$-$\TRO$, is a weak* closed operator space $V$ which is closed under the ternary product $(x,y,z)\mapsto xy^*z$. The class of preduals of a $\mathrm{W}^*$-$\TRO$ includes the preduals of von Neumann algebras.

For general noncommutative $\L^p$-spaces, the relation between complete isometry and positive contractive complementability is more subtle. By the classification theorem of Junge, Ruan and Sherman \cite{JRS05}, a complete isometry between noncommutative $\L^p$-spaces is governed by a normal $*$-homomorphism together with a partial isometry. This additional partial-isometry factor shows that the complete isometry class of a subspace does not, by itself, retain its order structure. Nevertheless, we show that this obstruction disappears in the presence of a positive contractive projection. More precisely, if $P\co\L^p(\cal M)\to\L^p(\cal M)$ is a positive contractive projection and $\Ran P$ is completely isometric to a noncommutative $\L^p$-space, then $P$ is necessarily completely positive. Thus, among positively contractively complemented subspaces, being completely isometric to a noncommutative $\L^p$-space already forces the full completely positive structure. This provides a converse to the main result of \cite{ArR24}. The proof is not a formal consequence of the classification of complete isometries: it requires showing that the partial-isometry factor occurring in the Junge--Ruan--Sherman representation can be absorbed into the von Neumann algebraic part of the factorization. We refer to \cite{Arh26b} and \cite{Bof25} for more information on positive contractive projections and to \cite{Arh24a} and \cite{ArL26} for related papers.

The second main purpose of the present paper is to establish a rectangular counterpart of this phenomenon. Note that a $\mathrm{W}^*$-$\TRO$ $V$ can be realized as an off-diagonal corner $e\mathrm{R}(V)e^\perp$ of its linking von Neumann algebra $\mathrm{R}(V)$, where we use the orthogonal projection $e^\perp \ov{\mathrm{def}}{=} 1-e$. In \cite{Arh24b}, if $\mathrm{R}(V)$ is $\sigma$-finite, we associated with a $\mathrm{W}^*$-TRO $V$ a rectangular noncommutative $\L^p$-space
\[
\L^p(V,\varphi)
\ov{\mathrm{def}}{=} e\L^p(\mathrm{R}(V),\varphi)e^\perp,
\]
where $\varphi$ is a normal faithful state on the linking von Neumann algebra $\mathrm{R}(V)$ and where $1 \leq p < \infty$. We also showed in \cite{Arh24b} that these rectangular $\L^p$-spaces arise as ranges of contractively decomposable projections on noncommutative $\L^p$-spaces. More recently, the converse structural statement is proved in \cite[Theorem 1.3]{Bof26} : the range of any contractively decomposable projection on a $\sigma$-finite noncommutative $\L^p$-space is completely isometrically isomorphic to a corner $e\L^p(\cal{N})(1-e)$.

The new point of this paper is that the manner in which such a rectangular space is embedded does not matter. More precisely, suppose that $1<p<\infty$ with $p\not=2$ and let $Y$ be a closed subspace of $\L^p(\cal M)$, where $\cal{M}$ is $\sigma$-finite. We prove that if $Y$ is completely isometric to a rectangular noncommutative $\L^p$-space $e\L^p(\cal{N})e^\perp$, then $Y$ is the range of a contractively decomposable projection on $\L^p(\cal{M})$. Combined with \cite[Theorem~1.3]{Bof26}, this implies that $Y$ is contractively decomposably complemented if and only if $Y$ is completely isometric to some corner $e\L^p(\cal{N})e^\perp$. Equivalently, the ranges of contractively decomposable projections are precisely the subspaces completely isometric to rectangular $\L^p$-spaces associated with $\mathrm{W}^*$-TROs.

\paragraph{Approach of the paper} 
The proof of the first main result relies on the classification of complete isometries of Junge, Ruan and Sherman \cite{JRS05}. Let $P\co\L^p(\cal M)\to\L^p(\cal M)$ be a positive contractive projection and suppose that $\Ran P$ is completely isometric to $\L^p(\cal N)$. A complete isometry $T\co\L^p(\cal N)\to\Ran P$ admits a factorization of the form $T=w\widetilde T$, where $w$ is a partial isometry and $\widetilde T$ is a completely positive complete isometry associated with a normal $*$-homomorphism. The main point is to show that the partial-isometry factor $w$, which in general prevents $T$ from being positive, can be absorbed into the von Neumann algebraic part of the factorization. We first use the positivity of $P$, hence the selfadjointness of its range, together with left and right support projections, to show that $w$ is a unitary in the relevant corner. We then consider a positive element of $\Ran P$ having full support and use the bimodule structure of $\widetilde T$ to prove that $w$ belongs to the von Neumann algebra generated by the image of the underlying $*$-homomorphism. It follows that $\Ran T=\Ran\widetilde T$. Since the latter range is completely positively and completely contractively complemented by the Junge--Ruan--Sherman theorem, the uniqueness of a contractive projection onto a fixed subspace of the smooth Banach space $\L^p(\cal M)$ implies that $P\co \L^p(\cal M)\to\L^p(\cal M)$ itself is completely positive.

The proof of the second main result is based on a stabilization argument. After amplification by $S^p$, a rectangular noncommutative $\L^p$-space can be identified completely isometrically with an ordinary noncommutative $\L^p$-space. The classification theorem of Junge, Ruan and Sherman then yields a completely contractive projection onto the stabilized copy. The particular form of this projection shows that it is in fact contractively decomposable. By compressing to a matrix corner, we obtain a contractively decomposable projection onto the original subspace. 


\paragraph{Structure of the paper}
The paper is organized as follows. In Section~\ref{sec-preliminaries}, we recall some facts on $\mathrm{W}^*$-TROs, support projections, properly infinite projections and uniqueness of contractive projections. In Section~\ref{sec-cp}, we discuss completely isometric copies of noncommutative $\L^p$-spaces and their complementability. In Section~\ref{sec-rectangular-spaces}, we consider rectangular noncommutative $\L^p$-spaces and prove the characterization of the ranges of contractively decomposable projections described previously.

\section{Preliminaries}
\label{sec-preliminaries}

This section collects the few structural facts needed in the proofs of our two main results. Properly infinite projections will be used in the stabilization argument of Section~\ref{sec-rectangular-spaces}. The language of $\W^*$-$\TRO$s and linking von Neumann algebras provides the natural framework for rectangular noncommutative $\L^p$-spaces. Support projections will be used to analyze the partial isometry arising in the Junge--Ruan--Sherman factorization, whereas the Cohen--Sullivan uniqueness theorem will allow us to identify contractive projections having the same range.

\paragraph{Projections in von Neumann algebras}
Let $p$ be an orthogonal projection in a von Neumann algebra $\cal{M}$. The smallest projection $c(p)$ in the center $\Zc(\cal{M})$ containing $p$ as a subprojection is called the central support (or central carrier) \cite[III.1.1.5 p.~223]{Bla06} (see also \cite[Definition 1.5.7 p.~34]{Li92}) of $p$. By \cite[Proposition 1.5.8 p.~34]{Li92}, if $p$ is a projection in $\cal{M}$, and $z$ is a central projection in $\cal{M}$. Then 
\begin{equation}
\label{support-central-et-z}
zc(p) 
= c(zp).
\end{equation}
Following \cite[Definition 6.1.4 p.~402]{KaR97b}, two orthogonal projections $p$ and $q$ in a von Neumann algebra $\cal{M}$ are said to be (Murray-von Neumann) equivalent in $\cal{M}$, written $p \sim q$, if there exists a partial isometry $w \in \cal{M}$ such that $w^*w = p$ and $ww^* = q$. We say that $p$ is subordinate to $q$, denoted by $p \lesssim q$ if $p$ is equivalent to a subprojection of $q$, i.e., if there is a partial isometry $w \in \cal{M}$ with $w^*w = p$ and $ww^* \leq q$.  

\paragraph{Properly infinite projections}
An orthogonal projection $p$ in a von Neumann algebra $\cal{M}$ is called properly infinite if there exist some orthogonal projections $p_1$ and $p_2$ such that $p \sim p_1$, $p \sim p_2$, $p_1 \leq p$, $p_2 \leq p$ and $p_1 \perp p_2$. Roughly speaking, a properly infinite projection contains two orthogonal copies of itself.

Following \cite[p.~226]{Bla06}, a von Neumann algebra $\cal{M}$ is said to be locally countably decomposable if there is a family $(z_i)_{i \in I}$ of mutually orthogonal central projections in $\cal{M}$ with $\sum_{i \in I} z_i = 1_{\cal{M}}$ such that each von Neumann algebra $\cal{M}z_i$ is countably decomposable. The following is \cite[Corollary III.1.3.7 p.~229]{Bla06}.

\begin{prop}
\label{prop-Blackadar}
Let $\cal{M}$ be a locally countably decomposable von Neumann algebra, and $p$ a properly infinite projection in $\cal{M}$ with central support 1. Then $p \sim 1$.
\end{prop}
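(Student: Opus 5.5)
This is a quoted result (\cite[Corollary III.1.3.7]{Bla06}), but a proof can be sketched by reducing to the countably decomposable case and gluing.

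\textbf{Reduction to the countably decomposable case.} Let $(z_i)_{i \in I}$ be the family of mutually orthogonal central projections with $\sum_i z_i = 1$ and each $\cal{M}z_i$ countably decomposable. For each $i$, the projection $pz_i$ is properly infinite in $\cal{M}z_i$ whenever it is nonzero: multiply the data $p \sim p_1$, $p \sim p_2$, $p_1 \perp p_2$, $p_1,p_2 \leq p$ by the central projection $z_i$, using the partial isometries $wz_i$. By \eqref{support-central-et-z}, $c(pz_i) = z_i c(p) = z_i$, which is the unit of $\cal{M}z_i$; in particular $pz_i \neq 0$ whenever $z_i \neq 0$. If each piece yields a partial isometry $v_i \in \cal{M}z_i$ with $v_i^*v_i = pz_i$ and $v_iv_i^* = z_i$, then $v = \sum_i v_i$ converges strongly, because the $v_i$ live in orthogonal central summands. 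It is a partial isometry with $v^*v = p$ and $vv^* = 1$.

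\textbf{The countably decomposable case.} Assume now that $\cal{M}$ is countably decomposable, $p$ is properly infinite and $c(p)=1$. Since $p \leq 1$, it suffices to prove $1 \lesssim p$ and then apply the Schröder--Bernstein theorem for projections.

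Because $p$ is properly infinite, we can iterate the halving $p \sim p_1 \perp p_2 \sim p$ inside $p_2$. This produces a sequence $(q_n)_{n\geq 1}$ of mutually orthogonal subprojections of $p$, each equivalent to $p$. Set $q = \sum_n q_n \leq p$.

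Next, using $c(p)=1$ and the comparison theorem, choose by Zorn's lemma a maximal family $(e_j)_{j\in J}$ of mutually orthogonal nonzero projections with $e_j \lesssim p$. Maximality and $c(p)=1$ force $\sum_j e_j = 1$. Indeed, if $f = 1 - \sum_j e_j \neq 0$, then $c(f)c(p) = c(f) \neq 0$, so $f$ and $p$ have equivalent nonzero subprojections, contradicting maximality. Countable decomposability makes $J$ countable, say $J \subset \mathbb{N}$.

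Now embed each $e_n \lesssim p \sim q_n$ into $q_n$. Summing these orthogonal embeddings, which is legitimate since both the sources $e_n$ and the targets in $q_n$ are orthogonal, gives
\[
1 = \sum_n e_n \lesssim \sum_n q_n \leq p .
\]
Schröder--Bernstein then gives $p \sim 1$.

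\textbf{Main obstacle.} The delicate point is the exhaustion step: showing that $c(p)=1$ allows $1$ to be written as a \emph{countable} orthogonal sum of projections subordinate to $p$. This needs both the comparison theorem and countable decomposability, and it is exactly where the hypothesis of local countable decomposability is used. Without it, $J$ could be uncountable while $p$ only contains countably many orthogonal copies of itself.
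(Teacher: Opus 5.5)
Your proof is correct and complete. The central reduction (with $c(pz_i)=z_i$ by \eqref{support-central-et-z}), the countable exhaustion $1=\sum_n e_n$ with each $e_n\lesssim p$, the orthogonal embedding of $e_n$ into the copies $q_n\sim p$ inside $p$, and the Schr\"oder--Bernstein step all go through as you wrote them. The paper gives no proof of its own and only quotes Blackadar; your argument is the standard textbook route behind that citation, so there is nothing to correct.
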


\paragraph{Ternary rings of operators} Rectangular operator spaces are naturally modeled by off-diagonal corners of von Neumann algebras. The intrinsic objects corresponding to such corners are ternary rings of operators, which we briefly recall. A ternary ring of operators (or simply $\TRO$) is a norm closed subspace $V$ of the space $\B(H,K)$, for some complex Hilbert spaces $H$ and $K$, which is closed under the triple product $(x,y,z) \mapsto xy^*z$, see, e.g., \cite[4.4.1 p.~161]{BLM04}. A TRO $V$ is called a $\W^*$-$\TRO$ if it is weak* closed in the dual Banach space $\B(H,K)$. A sub-TRO of a TRO $V$ is a closed subspace $W$ of $V$ satisfying $W W^*W \subset W$. We refer to \cite{BLM04}, \cite{BFT12}, \cite{BuT19}, \cite{BuT13a}, \cite{BuT13b}, \cite{DoR07}, \cite{EsM23}, \cite{Ham92}, \cite{Ham99}, \cite {KaR02}, \cite{NeR03}, \cite{PlR19}, \cite{SaS13}, \cite{SaS17} and \cite{Zet83} for more information on TROs.

\begin{example} \normalfont
\label{typical-TRO}
A basic example of $\W^*$-$\TRO$ is given by $e\cal{M}f$ where $e,f$ are orthogonal projections of a von Neumann algebra $\cal{M}$.
\end{example}

\paragraph{Operator space structures of TROs}
Since the results of this paper are formulated in terms of complete isometries, the canonical operator space structure of a TRO will play an essential role. Every TRO admits an operator space structure. Indeed, let us assume that $V$ is a TRO contained in $\B(H,K)$. Then for each integer $n \geq 1$, the matrix space $\M_n(V)$ can be identified with a TRO contained in $\M_n(\B(H,K)) \cong \B(H^n,K^n)$. This provides a canonical operator space matrix norm on $V$ such that each matrix space $\M_n(V)$ is a TRO. By \cite[Proposition 2.1 p.~265]{KaR02} or \cite[Proposition 2.1 p.~83]{Ham99}, the TRO-matrix norms are uniquely determined on each TRO and do not depend on the choice of the representing Hilbert spaces.

\paragraph{$\TRO$-homomorphisms} A $\TRO$-homomorphism (or triple morphism) is a linear map $T \co V \to W$ between two $\TRO$s respecting the triple product, i.e.,
$$
T(xy^*z) 
= T(x)T(y)^*T(z), \quad x,y,z \in V.
$$
If, in addition, $T$ is an injection from $V$ onto $W$, we call $T$ a $\TRO$-isomorphism from $V$ onto $W$. By \cite[Proposition 2.4 p.~498]{EOR01}, a TRO-isomorphism between $\W^*$-TROs is necessarily weak* continuous. By  \cite[Proposition 2.1 p.~495]{EOR01}, every TRO-homomorphism is completely contractive, and every injective $\TRO$-homomorphism is completely isometric. Finally, by \cite[Proposition 2.1 p.~83]{Ham99} (see also \cite[Corollary 4.4.6 p.~163]{BLM04}), if $T \co V \to W$ is a linear map between TRO's then $T$ is a surjective complete isometry if and only if $T$ is a surjective 2-isometry if and only if $T$ is a TRO-isomorphism. Thus, for TROs, the complete isometric structure already determines the triple product.

\begin{example} \normalfont
\label{E-finite-dim-TRO}
Every finite-dimensional $\TRO$ $V$ is completely isometric to a finite direct sum of rectangular matrix algebras, i.e., it has the form
$$
V 
= \M_{m_1,n_1} \oplus_\infty \cdots \oplus_\infty \M_{m_k,n_k}, 
$$
see \cite[Corollary A.2 p.~302]{Kan13}.
\end{example}


\paragraph{Linking algebra} The linking algebra allows us to pass back and forth between the rectangular setting of TROs and the usual setting of von Neumann algebras. If $\cal{M}$ is a von Neumann algebra and $e$ is an orthogonal projection in $\cal{M}$, then $e\cal{M} e^\perp$ is a $\W^*$-TRO by Example \ref{typical-TRO}. Conversely, if the subspace $V$ of the space $\B(H,K)$ is a $\W^*$-TRO, then we can consider the adjoint space $V^* \ov{\mathrm{def}}{=} \{x^* : x \in V \}$, which is a subspace of the space $\B(K,H)$ and the von Neumann subalgebras 
\begin{equation}
\label{def-MV-NV}
\mathrm{M}(V) \ov{\mathrm{def}}{=} \ovl{\Span VV^*}^{\w^*}
\quad \text{and} \quad 
\mathrm{N}(V) \ov{\mathrm{def}}{=} \ovl{\Span V^*V}^{\w^*}
\end{equation}
of the algebras $\B(K)$ and $\B(H)$. Since $\mathrm{M}(V) \cdot V \subset V$ and $V \cdot \mathrm{N}(V) \subset V$, we can introduce the von Neumann algebra
\begin{equation}
\label{Linking-algebra}
\mathrm{R}(V)
\ov{\mathrm{def}}{=} \begin{bmatrix}
  \mathrm{M}(V)   &  V \\
  V^*   &  \mathrm{N}(V) \\
\end{bmatrix}
\subset \B(K \oplus H),
\end{equation}
which is called the linking von Neumann algebra of $V$, see \cite{KaR02}, \cite[p.~846]{Rua04} and \cite{WCW24}. Then we have a TRO-isomorphism 
\begin{equation}
\label{TRO-linking}
V 
= e \mathrm{R}(V)e^\perp,
\end{equation}
where $e 
\ov{\mathrm{def}}{=} \begin{bmatrix}
    \Id_K & 0  \\
    0 & 0  \\
\end{bmatrix}
$ 
and 
$e^\perp
\ov{\mathrm{def}}{=} \begin{bmatrix}
   0  &  0 \\
   0  &  \Id_H \\
\end{bmatrix}$. So a $\W^*$-TRO $V$ can be identified with the off-diagonal corner at the (1,2) position of its linking von Neumann algebra. By \cite[Lemma 2.1 p.~847]{Rua04}, the central covers $c(e)$ and $c(e^\perp)$ of $e$ and $e^\perp$ in the linking von Neumann algebra $\mathrm{R}(V)$ are equal to $1$.

Conversely, it is worth noting that, according to \cite[Theorem 2.1]{WCW24}, a von Neumann algebra is $*$-isomorphic to the linking von Neumann algebra of a $\W^*$-TRO if and only if it contains no abelian direct summand.

\paragraph{Support projections}
We next recall the support notation needed in Section~\ref{sec-cp}. If $x$ is an unbounded operator on a Hilbert space,  we have
\begin{equation}
\label{support-projections}
s_r(x)
=s(x^*x)
\quad \text{and} \quad
s_\ell(x)
=s(xx^*).
\end{equation}
If $x=u|x|$ is the polar decomposition of $x$, then $s_r(x)=u^*u$ and $s_\ell(x)=uu^*$. Thus the right and left supports are respectively the initial and final projections of the partial isometry $u$ occurring in the polar decomposition.

\paragraph{Duality mappings}

Following \cite[Definition 5.1.1 p.~426]{Meg98}, we say that a normed linear space $X$ is strictly convex (or rotund) if for any $t \in (0,1)$ and any $x,y \in X$ with $\norm{x}_X=\norm{y}_X=1$ and $x \not= y$ we have $\norm{tx+(1-t)y}_X <1$. 
A normed space $X$ is said to be smooth \cite[p.~480]{Meg98} if for any $x \in X$ there exists a unique $x^* \in X^*$ with $\norm{x^*}_{X^*} = 1$ such that $\la x^*,x\ra=1$. According to \cite[Proposition 5.4.7 p.~481]{Meg98}, a \textit{reflexive} normed space is smooth if and only if its dual space is strictly convex. 

We refer to \cite{Pat18} and \cite{Cio90} for more information on  duality mappings and their relation with the geometry of Banach spaces. Let $X$ be a Banach space. For each $x \in X$, we define the subset
\begin{equation}
\label{Def-JX-gauge}
\frak{j}(x)
\ov{\mathrm{def}}{=}
\big\{x^*\in X^*:\la x,x^*\ra_{X,X^*}
=\norm{x}_X^2,
\norm{x^*}_{X^*}=\norm{x}_X\big\}.
\end{equation}
of the dual $X^*$. The multivalued map $\frak{j} \co X \to 2^{X^*}$ is called the duality mapping. By the Hahn--Banach theorem\footnote{\thefootnote. For any $x \in X$, there exists $y^* \in X^*$ with $\norm{y^*}_{X^*} = 1$ such that $\la x, y^*\rangle_{X,X^*} = \norm{x}_X$. Using $x^*=\norm{x}_Xy^*$, we conclude that $\frak{j}(x) \not= \emptyset$ for each $x \in X$.}, $\frak{j}(x)$ is nonempty for every $x \in X$, see \cite[Remark 4.2 p.~25]{Cio90}. The Banach space $X$ is smooth if and only if $\frak{j}$ is single-valued, see \cite[Corollary 4.5 p.~27]{Cio90}. 

We will use the following observation \cite[Theorem 6]{CoS70} of Cohen and Sullivan. We include the short uniqueness argument. For any $x \in X$, let $\frak{j}(x)$ denote its unique normalized norming functional, so that 
\begin{equation}
\label{normalized-norming-functional}
\norm{\frak{j}(x)}_{X^*}
= \norm{x}_{X}
\quad \text{and} \quad
\la x,\frak{j}(x) \ra_{X,X^*} 
= \norm{x}_{X}^2.
\end{equation}

\begin{prop}[Cohen--Sullivan]
\label{prop-Sullivan}
A subspace $Y$ of a smooth Banach space $X$ can be the range of at most one projection of norm one. 
\end{prop}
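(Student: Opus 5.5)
The plan is to show that the kernel of any norm-one projection onto $Y$ is annihilated by the norming functionals $\frak{j}(y)$, $y \in Y$. Comparing two such projections then forces their difference to vanish. Unlike an argument based on injectivity of $\frak{j}$, this uses only smoothness of $X$, through the Gateaux differentiability of the norm, and no strict convexity.

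\textbf{Step 1: kernels are orthogonal to $Y$ in the Birkhoff sense.} Let $P$ be a projection of norm one with $\Ran P=Y$. Fix $y \in Y$ and $z \in \ker P$. For every scalar $t$ we have $P(y+tz)=y$, hence
\[
\norm{y}_X \leq \norm{y+tz}_X .
\]
So the function $t \mapsto \norm{y+tz}_X$ attains its minimum at $t=0$.

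\textbf{Step 2: smoothness converts this into vanishing of $\frak{j}(y)$ on $\ker P$.} Take $y \neq 0$ (the case $y=0$ is trivial, since $\frak{j}(0)=0$). Smoothness of $X$ is equivalent to Gateaux differentiability of the norm at every nonzero point, and the derivative is given by the unique norming functional (see \cite{Meg98}, \cite{Cio90}). Explicitly,
\[
\frac{d}{dt}\Big|_{t=0}\norm{y+tz}_X=\frac{\mathrm{Re}\,\la z,\frak{j}(y)\ra_{X,X^*}}{\norm{y}_X}, \qquad t \in \mathbb{R}.
\]
Since $t=0$ is a minimum, this derivative vanishes, so $\mathrm{Re}\,\la z,\frak{j}(y)\ra=0$. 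Replacing $z$ by $\mathrm{i}z \in \ker P$ in the complex case also kills the imaginary part. Therefore
\[
\ker P \subset Y^{\perp_{\frak{j}}} \ov{\mathrm{def}}{=} \bigcap_{y \in Y} \ker \frak{j}(y).
\]
This set is an intersection of kernels of linear functionals, hence a linear subspace. I expect this step to be the only delicate point, namely justifying the derivative formula from the definition of smoothness. If needed, I would prove it directly: any subgradient of the norm at $y$ is a norm-one functional attaining $\norm{y}_X$ at $y$, so it is unique by smoothness, and uniqueness of the subgradient of a convex function is equivalent to Gateaux differentiability.

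\textbf{Step 3: conclusion.} Let $Q$ be another norm-one projection with $\Ran Q=Y$. By Steps 1 and 2 applied to $Q$, we also have $\ker Q \subset Y^{\perp_{\frak{j}}}$. For $x \in X$, set $w \ov{\mathrm{def}}{=} Px-Qx$. On the one hand $w \in Y$. On the other hand
\[
w=(x-Qx)-(x-Px) \in \ker Q+\ker P \subset Y^{\perp_{\frak{j}}}.
\]
Taking $y=w$ in the definition of $Y^{\perp_{\frak{j}}}$ and using \eqref{normalized-norming-functional} gives $\norm{w}_X^2=\la w,\frak{j}(w)\ra=0$. Hence $Px=Qx$ for every $x \in X$, that is $P=Q$.
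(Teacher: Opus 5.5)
Your proof is correct, and its skeleton is the paper's. In both arguments the key fact is that, for any norm-one projection $R$ onto $Y$ and any $y\in Y$, the functional $\frak{j}(y)$ vanishes on $\ker R$. This is equivalent to $R^*\frak{j}(y)=\frak{j}(y)$, because $x-Rx\in\ker R$ for every $x$. Your Step 3 is then the paper's final computation with $y=Px-Qx$.

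The two proofs differ only in how that key fact is obtained. The paper gets it by pure duality. One has $\la y,R^*\frak{j}(y)\ra=\la Ry,\frak{j}(y)\ra=\norm{y}^2$ and $\norm{R^*\frak{j}(y)}\leq\norm{y}$, so $R^*\frak{j}(y)$ is itself a norming functional of $y$ and equals $\frak{j}(y)$ by smoothness.

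You instead prove Birkhoff orthogonality $\norm{y}\leq\norm{y+tz}$ for $z\in\ker P$, and then differentiate the norm. This needs an extra standard but nontrivial theorem: smoothness is equivalent to Gateaux differentiability of the norm, with derivative given by the norming functional. That is the max formula for subdifferentials of continuous convex functions, itself a Hahn--Banach argument.

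Your route makes the geometry visible: the kernel of a norm-one projection is Birkhoff--James orthogonal to its range. It is, however, heavier than necessary. Even within your framework you could skip derivatives, since James's characterization of Birkhoff orthogonality gives a norming functional of $y$ vanishing at $z$, which smoothness identifies with $\frak{j}(y)/\norm{y}$.

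Your opening remark about avoiding strict convexity or injectivity of $\frak{j}$ does not separate your proof from the paper's. The paper's argument also uses only the single-valuedness of $\frak{j}$.
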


\begin{proof}
If $R \co X \to X$ is any contractive projection onto the subspace $Y$ then, for any $y \in Y$, we have
\[
\big\la y,R^*\frak{j}(y) \big\ra_{X,X^*}
=\big\la R(y),\frak{j}(y) \big\ra_{X,X^*}
=\la y,\frak{j}(y) \ra_{X,X^*}
\ov{\eqref{normalized-norming-functional}}{=} \norm{y}_{X}^2
\]
whereas $\norm{R^*\frak{j}(y)}_{X^*} \leq \norm{\frak{j}(y)}_{X^*} \ov{\eqref{normalized-norming-functional}}{=} \norm{y}_{X}$. By uniqueness of the norming functional, $R^*\frak{j}(y) = \frak{j}(y)$. Suppose that $P,Q \co X \to X$ are contractive projections on $Y$. Now, we apply the previous observation to both $P$ and $Q$. For any $x \in X$, introducing the element $y \ov{\mathrm{def}}{=} P(x)-Q(x)$ in $Y$, we obtain
\begin{align*}
\MoveEqLeft
\norm{y}_{X}^2
\ov{\eqref{normalized-norming-functional}}{=} \la y,\frak{j}(y) \ra_{X,X^*} 
= \la P(x)-Q(x),\frak{j}(y) \ra_{X,X^*} 
=\la (P-Q)(x),\frak{j}(y) \ra_{X,X^*} \\
&=\la x,P^*\frak{j}(y)-Q^*\frak{j}(y) \ra_{X,X^*} 
=\la x,\frak{j}(y)-\frak{j}(y) \ra_{X,X^*} 
=0.         
\end{align*}
Consequently, we obtain $y=0$, so $P(x) = Q(x)$ for any $x \in X$. We conclude that $P=Q$.
\end{proof}

\section{Subspaces completely isometric to a noncommutative $\L^p$-space}
\label{sec-cp}


The purpose of this section is to prove that the complete isometric structure of the range of a positive contractive projection detects complete positivity. The main difficulty is that a general complete isometry need not be positive: in the Junge--Ruan--Sherman factorization it contains a left multiplication by a partial isometry. Our argument consists in showing that the positivity of the projection forces this partial isometry to belong to the von Neumann algebraic part of the factorization, where it can be absorbed. We refer to \cite{PiX03} for more information on noncommutative $\L^p$-spaces. We denote by $h_\varphi$ the density operator of a normal linear positive functional $\varphi$. 

We first isolate the positive part of the Junge--Ruan--Sherman representation. Although the complete isometry considered later will not be assumed positive, after removing its partial-isometry factor we obtain a completely positive complete isometry to which the following facts apply.

\paragraph{Positive 2-isometries} Suppose that $1<p<\infty$ with $p\not=2$. Let $\cal{M}$ be a $\sigma$-finite von Neumann algebra equipped with a normal faithful state. Let $\cal{N}$ be a $\sigma$-finite von Neumann algebra equipped with a normal faithful state $\varphi$ and let $T \co \L^p(\cal{N}) \to \L^p(\cal{M})$ be a positive map. According to \cite[Theorem~2 p.~287]{JRS05}, \cite[Theorem~4.9 p.~308]{JRS05} and \cite[Remark~2 p.~310]{JRS05}, the map $\Id\ot T\co\L^p(\M_2\otvn\cal{N})\to\L^p(\M_2\otvn\cal{M})$ is an isometry if and only if there exist an injective normal $*$-homomorphism $\pi \co \cal{N} \to \cal{M}$ and a faithful normal conditional expectation $F \co e\cal{M}e \to \pi(\cal{N})$, where $e \ov{\mathrm{def}}{=} \pi(1)$, such that, if we define $\E \co \cal{M} \to \pi(\cal{N})$ by
\begin{equation}
\label{JRS-expectation-corner}
\E(x)
\ov{\mathrm{def}}{=}F(exe),
\end{equation}
then
\begin{equation}
\label{JRS}
T\big(h_\varphi^{\frac1p}x\big)
=h_{\varphi\circ\pi^{-1}\circ\E}^{\frac1p}\pi(x),\quad x\in\cal{N}.
\end{equation}
Strictly speaking, the map $\E$ is not a conditional expectation from $\cal{M}$ onto $\pi(\cal{N})$, since in general $\E(1)=e=\pi(1)\not=1_{\cal{M}}$. We nevertheless keep the misleading notation and terminology of \cite{JRS05}, where $\E$ is referred to as a conditional expectation onto $\pi(\cal{N})$.

In this case, $T$ is automatically completely positive and completely isometric, and its range is the range of a completely positive and completely contractive projection on $\L^p(\cal{M})$. Moreover, $T$ is a right $\pi(\cal{N})$-module map:
\begin{equation}
\label{module-map}
T(hx)=T(h)\pi(x),\quad h\in\L^p(\cal{N}),\ x\in\cal{N}.
\end{equation}
Finally, by \cite[Theorem~2, (1.2) p.~288]{JRS05}, for every $\omega\in\cal{N}_*^+$ we have
\begin{equation}
\label{JRS-bis}
T\big(h_\omega^{\frac1p}\big)
=h_{\omega\circ\pi^{-1}\circ\E}^{\frac1p}.
\end{equation}

We first record a support property which will be used repeatedly below. It shows that a completely positive complete isometry transports the support projections exactly through the underlying $*$-homomorphism.

\begin{prop}
\label{prop-supports}
Let $\cal{N}$ and $\cal{M}$ be $\sigma$-finite von Neumann algebras equipped with a normal faithful states.  Suppose that $1 < p < \infty$ with $p \not=2$. Let $T \co \L^p(\cal{N}) \to \L^p(\cal{M})$ be a completely positive complete isometry. If $a \in \L^p(\cal{N})_+$ then
\begin{equation}
\label{support-S-pi}
s(T(a))
=\pi(s(a)),
\end{equation}
where $\pi \co \cal{N} \to \cal{M}$ is the normal injective $*$-homomorphism provided by the Junge–Ruan–Sherman factorization of $T$.
\end{prop}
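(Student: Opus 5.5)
Since $a\in\L^p(\cal N)_+$, we have $a^p\in\L^1(\cal N)_+$. Hence $a^p=h_\omega$ for a unique $\omega\in\cal N_*^+$, so that $a=h_\omega^{1/p}$. By \eqref{JRS-bis},
\[
T(a)=h_{\omega\circ\pi^{-1}\circ\E}^{1/p}.
\]
The support of $h_\psi^{1/p}$ equals the support of $h_\psi$, which is the support projection $s(\psi)$ of the normal positive functional $\psi$. The claim \eqref{support-S-pi} therefore reduces to the identity
\[
s(\omega\circ\pi^{-1}\circ\E)=\pi(s(\omega)),\qquad s(a)=s(h_\omega)=s(\omega).
\]
The proof is thus a computation of the support of the functional $\psi\ov{\mathrm{def}}{=}\omega\circ\pi^{-1}\circ\E$ on $\cal M$.

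First we show $\psi(1-\pi(s(\omega)))=0$, which gives $s(\psi)\leq\pi(s(\omega))$. Write $q=\pi(s(\omega))$. Then $q\leq e=\pi(1)$ and $q\in\pi(\cal N)$. By \eqref{JRS-expectation-corner}, $\E(1-q)=F(e(1-q)e)=F(e-q)=e-q$, because $F$ is the identity on $\pi(\cal N)$. Hence
\[
\psi(1-q)=\omega(\pi^{-1}(e-q))=\omega(1-s(\omega))=0 .
\]

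Next we prove the reverse inequality. Let $r\leq q$ be a projection in $\cal M$ with $\psi(r)=0$; we must show $r=0$. Since $r\leq q\leq e$, we have $r=ere$, so $\E(r)=F(r)\geq0$ and $\omega(\pi^{-1}(F(r)))=0$. Now $\pi^{-1}(F(r))$ is a positive element of $\cal N$ lying in $s(\omega)\cal N s(\omega)$, because $F$ is a $\pi(\cal N)$-bimodule map and $r=qrq$ gives $F(r)=qF(r)q$. The functional $\omega$ is faithful on $s(\omega)\cal N s(\omega)$, so $\pi^{-1}(F(r))=0$, that is, $F(r)=0$. Since $F$ is faithful on $e\cal M e$ and $r\geq 0$, we get $r=0$. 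Therefore $s(\psi)=q$.

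The main obstacle is bookkeeping rather than mathematics. We must make sure that the Junge--Ruan--Sherman data $(\pi,F,\E)$ attached to $T$ are those for which \eqref{JRS-bis} holds; this is legitimate because $T$ is a positive complete isometry, so in particular $\Id\ot T$ is isometric. We must also justify that $s(h_\psi^{1/p})=s(h_\psi)=s(\psi)$ in Haagerup's $\L^p$. This follows from functional calculus, since $t\mapsto t^{1/p}$ preserves the kernel, together with the standard correspondence between the support of a density and the support of its functional.
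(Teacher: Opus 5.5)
Your proposal is correct and follows essentially the same route as the paper: you reduce to computing the support of $\omega\circ\pi^{-1}\circ\E$, obtain $s(\psi)\leq\pi(s(\omega))$ because $\E$ fixes $\pi(\cal N)$ and annihilates $1-e$, and obtain the reverse inequality from the bimodularity of $\E$, the faithfulness of $\omega$ on $s(\omega)\cal N s(\omega)$, and the faithfulness of $F$ on $e\cal M e$. The differences are cosmetic. You compute $\E(1-q)=e-q$ in one step, whereas the paper splits it as $\theta(1-e)+\theta(e-\pi(s(a)))$. You also test faithfulness only on projections $r\leq q$ rather than on all positive elements of the corner; this suffices, since $q-s(\psi)$ would otherwise be a nonzero such projection.
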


\begin{proof}
Let $\omega \in \cal{N}_*^+$ be the normal positive functional corresponding to the positive element $a^p$ of $\L^1(\cal{N})$. We have $h_\omega=a^p$, $a=h_\omega^{\frac1p}$ and $s(\omega) = s(a^p)=s(a)$. The Junge--Ruan--Sherman formula \eqref{JRS-bis} gives
\[
T(a)
\ov{\eqref{JRS-bis}}{=} h_{\omega \circ \pi^{-1} \circ \E}^{\frac1p}.
\]
Consider the normal linear positive functional
\begin{equation}
\label{def-theta}
\theta 
\ov{\mathrm{def}}{=} \omega \circ \pi^{-1} \circ \E \co \cal{M} \to \mathbb{C}.
\end{equation} 
Since $\E$ is a conditional expectation onto $\pi(\cal N)$, we have
\begin{equation}
\label{inf-39}
\E(\pi(1)-\pi(s(a)))
=\E(\pi(1-s(a)))
=\pi(1-s(a)).
\end{equation}
Consequently, we obtain
\begin{equation}
\label{theta-789}
\theta(\pi(1)-\pi(s(a)))
\ov{\eqref{def-theta}}{=} \omega(\pi^{-1}(\E(\pi(1)-\pi(s(a)))))
\ov{\eqref{inf-39}}{=} \omega(1-s(a))
=\omega(1-s(\omega))
=0.
\end{equation}
Put $e\ov{\mathrm{def}}{=}\pi(1)$. By \eqref{JRS-expectation-corner}, the restriction $\E|_{e\cal{M}e}=F$ is faithful and $\E(x)=\E(exe)$ for every $x \in \cal{M}$. Hence $\E(1-e)=0$ and therefore
\begin{equation}
\label{theta-outside-e}
\theta(1-e)
\ov{\eqref{def-theta}}{=}\omega\circ\pi^{-1}\circ\E(1-e)
=0.
\end{equation}
Consequently, we have
\[
\theta(1-\pi(s(a)))
=\theta(1-\pi(1)) + \theta(\pi(1)-\pi(s(a)))
 \ov{\eqref{theta-outside-e} \eqref{theta-789}}{=} 0,
\]
and therefore $s(\theta)\leq\pi(s(a))$. 

Conversely, consider a positive element $x\in\pi(s(a))\cal{M}\pi(s(a))$ and suppose that $\theta(x)=0$. Since $\E$ is a conditional expectation onto $\pi(\cal{N})$, it is $\pi(\cal{N})$-bimodular. Hence
\[
\E(x)
=\E(\pi(s(a))x\pi(s(a)))
=\pi(s(a))\E(x)\pi(s(a)).
\]
Thus $\E(x)$ is a positive element of the von Neumann algebra $\pi(s(a))\pi(\cal{N})\pi(s(a))$. Since the functional $\omega$ is faithful on $s(a)\cal{N}s(a)$, the equality
\[
0
=\theta(x)
\ov{\eqref{def-theta}}{=}
\omega(\pi^{-1}(\E(x)))
\]
implies $\pi^{-1}(\E(x))=0$, and hence $\E(x)=0$. Moreover, $\pi(s(a))\leq e$, so $x\in e\cal{M}e$. Since the restriction $\E|_{e\cal{M}e}=F$ is faithful by \eqref{JRS-expectation-corner}, we obtain $x=0$. Consequently, $\theta$ is faithful on $\pi(s(a))\cal{M}\pi(s(a))$. Since we already know that $s(\theta)\leq\pi(s(a))$, we conclude that
\[
s(\theta)=\pi(s(a)).
\]
Since $T(a)=h_\theta^{\frac1p}$ and $
s(h_\theta^{\frac1p})=s(h_\theta)=s(\theta)$, we finally obtain
\[
s(T(a))=s(\theta)=\pi(s(a)),
\]
which proves \eqref{support-S-pi}.
\end{proof}

The second property that we need is the bimodule structure of the range. This will eventually allow us to absorb the partial-isometry factor once we know that it belongs to $\pi(\cal{N})$.

\begin{prop}
\label{prop-bimodule}
Let $\cal{N}$ and $\cal{M}$ be $\sigma$-finite von Neumann algebras equipped with a normal faithful states. Suppose that $1 < p < \infty$ with $p \not=2$. Let $T \co \L^p(\cal{N}) \to \L^p(\cal{M})$ be a completely positive complete isometry. Then $T$ is a bimodule map. More precisely, we have \eqref{module-map} and
\begin{equation}
\label{S-right-module}
T(xy)
=\pi(x)T(y),\quad x \in \cal{N}, y \in \L^p(\cal{N}).
\end{equation}
In particular the subspace $\Ran T$ is a $\pi(\cal{N})$-bimodule.
\end{prop}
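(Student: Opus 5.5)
Since \eqref{module-map} is already part of the Junge--Ruan--Sherman description recalled above, only the left module property \eqref{S-right-module} needs proof. The plan is to derive it from the right module property by using the adjoint operation, which both $T$ and $\pi$ preserve.

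First, $T$ commutes with adjoints: $T(y^*)=T(y)^*$ for all $y\in\L^p(\cal{N})$. This holds because $T$ is positive. Indeed, every $y$ is a linear combination of four positive elements, $y=\sum_k i^k y_k$ with $y_k\geq 0$. Then $y^*=\sum_k \overline{i^k}\, y_k$, and since each $T(y_k)$ is selfadjoint,
\[
T(y^*)=\sum_k \overline{i^k}\,T(y_k)=\Big(\sum_k i^k\,T(y_k)\Big)^*=T(y)^*.
\]

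Next, fix $x\in\cal{N}$ and $y\in\L^p(\cal{N})$. Note that $(xy)^*=y^*x^*$ with $y^*\in\L^p(\cal{N})$ and $x^*\in\cal{N}$. Applying \eqref{module-map} to the pair $(y^*,x^*)$ gives
\[
T(xy)=T\big((y^*x^*)^*\big)=T(y^*x^*)^*=\big(T(y^*)\pi(x^*)\big)^*=\pi(x^*)^*\,T(y^*)^*=\pi(x)\,T(y).
\]
The last step uses that $\pi$ is a $*$-homomorphism and that $T(y^*)^*=T(y)$. Every adjoint and product here is taken in the Haagerup $\L^p$ setting, where $\L^p(\cal M)$ is an $\cal M$-bimodule and $(ab)^*=b^*a^*$ for $a\in\cal M$ and $b\in\L^p(\cal M)$. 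This proves \eqref{S-right-module}.

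Finally, the bimodule statement for $\Ran T$ follows by combining the two module identities. For $x,z\in\cal{N}$ and $y\in\L^p(\cal{N})$ we have $\pi(x)T(y)\pi(z)=T(xyz)$, which lies in $\Ran T$. So $\Ran T$ is stable under left and right multiplication by $\pi(\cal{N})$.

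The only delicate point is to make sure that \eqref{module-map}, stated above for $h\in\L^p(\cal{N})$, really holds for all such $h$ and not just on the dense set $h_\varphi^{1/p}\cal{N}$. If only the dense version is available, I would argue as follows. From \eqref{JRS}, for $x,z\in\cal{N}$ we get $T(h_\varphi^{1/p}xz)=h^{1/p}\pi(x)\pi(z)=T(h_\varphi^{1/p}x)\pi(z)$. The set $h_\varphi^{1/p}\cal{N}$ is dense in $\L^p(\cal{N})$. Both sides of the identity are norm continuous in $h$, because $T$ is bounded and right multiplication by $\pi(z)$ is bounded on $\L^p(\cal M)$. So the identity extends to all $h\in\L^p(\cal{N})$, and the argument above then applies.
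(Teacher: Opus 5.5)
Your proof is correct and follows the same route as the paper: positivity of $T$ gives $T(y^*)=T(y)^*$, and applying \eqref{module-map} to the pair $(y^*,x^*)$ and then taking adjoints yields $T(xy)=\pi(x)T(y)$. Your extra details are sound but not essential to the argument. These are the decomposition into four positive elements and the density argument extending \eqref{module-map} from $h_\varphi^{1/p}\mathcal{N}$ to all of $\mathrm{L}^p(\mathcal{N})$, steps the paper takes for granted.
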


\begin{proof}
Since the map $T \co \L^p(\cal{N}) \to \L^p(\cal{M})$ is completely positive, it preserves adjoints. Hence, for any $x \in \cal{N}$ and any $y \in \L^p(\cal{N})$, we have
\[
T(xy)
=T((y^*x^*)^*)
=T(y^*x^*)^*
\ov{\eqref{module-map}}{=} (T(y^*)\pi(x^*))^*
=\pi(x^*)^*T(y^*)^*
=\pi(x)T(y).
\]
\end{proof}

Let $Y$ be a subspace of $\L^p(\cal{M})$. We define its left and right support projections by
\begin{equation}
\label{support-subspace}
s_\ell(Y)\ov{\mathrm{def}}{=}\bigvee_{y\in Y}s_\ell(y)
\quad\text{and}\quad
s_r(Y)\ov{\mathrm{def}}{=}\bigvee_{y\in Y}s_r(y).
\end{equation}
In particular, if $P \co \L^p(\cal{M}) \to \L^p(\cal{M})$ is a bounded projection, we write $s_\ell(P)\ov{\mathrm{def}}{=} s_\ell(\Ran P)$ and $s_r(P) \ov{\mathrm{def}}{=}s_r(\Ran P)$. Now, suppose that the projection $P \co \L^p(\cal{M}) \to \L^p(\cal{M})$ is positive. Following \cite[Section~6]{ArR24}, we define the support projection of $P$ by
\begin{equation}
\label{support-positive-projection}
s(P)
\ov{\mathrm{def}}{=}\bigvee_{\substack{h\in\Ran P\\ h\geq0}}s(h).
\end{equation}
Then by \cite[Section~6]{ArR24} we have
\begin{equation}
\label{supports-positive-projection}
s_\ell(P)=s_r(P)=s(P).
\end{equation}

We now turn to the main argument. Let $P \co \L^p(\cal M)\to\L^p(\cal M)$ be a positive contractive projection with $\Ran P\not=\{0\}$. Suppose that there exists a complete isometry $T \co \L^p(\cal{N}) \to \L^p(\cal{M})$ with range $\Ran P$. Since $T$ is a complete isometry, it is in particular a $2$-isometry. By the classification theorem of Junge, Ruan and Sherman \cite[Theorem~2 p.~287]{JRS05} and \cite[Remark 2 p.~310]{JRS05}, there exist a normal injective $*$-homomorphism $\pi \co \cal{N} \to \cal{M}$, a partial isometry $w \in \cal{M}$ and a normal conditional expectation $\E \co \cal{M} \to \cal{M}$ onto $\pi(\cal{N})$ such that
\begin{equation}
\label{JRS-factorization-here}
e
\ov{\mathrm{def}}{=}\pi(1)=w^*w
\end{equation}
and such that the map
\begin{equation}
\label{def-T-tilde}
\tilde{T}
\ov{\mathrm{def}}{=}w^*T \co \L^p(\cal{N}) \to \L^p(\cal{M})
\end{equation}
is a completely positive complete isometry. Moreover, in the construction of \cite[Theorem~4.9]{JRS05}, the restriction $F=\E|_{e\cal M e}\co e\cal{M} e \to \pi(\cal{N})$ is a faithful normal conditional expectation. By \cite[Theorem~2 p.~287]{JRS05}, we have 
\begin{equation}
\label{factorization-T}
T
=w\tilde{T}.
\end{equation}
Since $\Ran T=\Ran P$, it follows that
\begin{equation}
\label{Y-wZ}
\Ran P
=w\Ran\tilde T.
\end{equation}

\begin{lemma}
\label{sigma-finite}
The von Neumann algebra $\cal{N}$ is $\sigma$-finite.
\end{lemma}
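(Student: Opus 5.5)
The plan is to transfer $\sigma$-finiteness from $\cal{M}$ to $\cal{N}$ through the injective normal $*$-homomorphism $\pi \co \cal{N} \to \cal{M}$ given by the Junge--Ruan--Sherman factorization \eqref{JRS-factorization-here}. Recall that $\cal{M}$ is $\sigma$-finite, since it carries a normal faithful state. The point is that $\pi$ is a $*$-isomorphism of $\cal{N}$ onto the von Neumann subalgebra $\pi(\cal{N})$ of the corner $e\cal{M}e$, where $e=\pi(1)$.

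First I will check that $\pi(\cal{N})$ is a von Neumann algebra with unit $e$. Since $\pi$ is normal and injective, its range is weak* closed and $\pi \co \cal{N} \to \pi(\cal{N})$ is a normal $*$-isomorphism.

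Next I will produce a faithful normal state on $\pi(\cal{N})$. Let $\psi$ be a normal faithful state on $\cal{M}$. The restriction of $\psi$ to $e\cal{M}e$ is normal, positive and faithful, and it is nonzero provided $e\neq 0$. If instead $e=0$, then $\cal{N}=\{0\}$ because $\pi$ is injective, and there is nothing to prove; this case is in any event excluded since $\Ran P\neq\{0\}$. Normalizing the restriction gives a faithful normal state on $e\cal{M}e$, and restricting it to $\pi(\cal{N})$ keeps it faithful and normal. I then set $\varphi \ov{\mathrm{def}}{=} \psi(e)^{-1}\,\psi\circ\pi$. This is a normal state on $\cal{N}$. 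It is faithful because $\varphi(x^*x)=0$ forces $\psi(\pi(x)^*\pi(x))=0$, hence $\pi(x)=0$, hence $x=0$ by injectivity. A von Neumann algebra with a faithful normal state is $\sigma$-finite, which proves the lemma.

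An alternative route avoids states: a family of mutually orthogonal nonzero projections in $\cal{N}$ maps under $\pi$ to such a family in $\cal{M}$, so it is countable.

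The only delicate step is making sure the factorization applies without already assuming that $\cal{N}$ is $\sigma$-finite. The Junge--Ruan--Sherman classification \cite[Theorem~2]{JRS05} is valid for general (semifinite or arbitrary) von Neumann algebras, using Haagerup $\L^p$-spaces, and it still provides a normal injective $\pi$. If one is worried about normality, I would use that the support map $a\mapsto s(a)$ is transported by $\pi$. Orthogonal families of projections in $\cal{N}$ then correspond to $T$-images with orthogonal supports $\pi(s(a))$, as in the argument of Proposition~\ref{prop-supports}. These are countable in $\cal{M}$, and that countability is the conclusion needed.
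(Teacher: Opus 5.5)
Your proof is correct and is essentially the paper's: the normalized functional $\psi(e)^{-1}\psi\circ\pi$ is a normal state on $\cal{N}$, and it is faithful because $\psi$ is faithful and $\pi$ is injective, with $e=\pi(1)\neq 0$ since $\Ran P\neq\{0\}$. The detour through $\pi(\cal{N})\subset e\cal{M}e$ is superfluous, and you should drop the final fallback via Proposition~\ref{prop-supports}, since that proposition already assumes $\cal{N}$ is $\sigma$-finite; your orthogonal-projection argument, which does not even need normality of $\pi$, already covers that worry.
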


\begin{proof} 
Since $\cal{M}$ is $\sigma$-finite, we can consider a normal faithful state $\varphi$ on $\cal{M}$. Since $\Ran P\not=\{0\}$, we have $\cal{N} \not=\{0\}$ and hence $e=\pi(1)\not=0$. Define
\[
\rho(x)
\ov{\mathrm{def}}{=}\frac{\varphi(\pi(x))}{\varphi(e)},\quad x\in\cal{N}.
\]
Then $\rho$ is a normal state on $\cal{N}$. Moreover, if $x\in\cal{N}_+$ and $\rho(x)=0$, then $\varphi(\pi(x))=0$. The faithfulness of $\varphi$ gives $\pi(x)=0$, and the injectivity of $\pi$ gives $x=0$. Thus $\rho$ is faithful. Consequently, $\cal{N}$ is $\sigma$-finite. 
\end{proof}

The first step is to compare the initial and final projections of $w$. Since $P$ is positive, its range is selfadjoint and therefore has identical left and right support projections. We now combine this observation with the support-preserving property of $\tilde T$. Since the von Neumann algebra $\cal{N}$ is $\sigma$-finite, choose with \cite[Exercise 7.6.46 p.~500]{KaR97b}, a faithful normal state $\rho$ on $\cal{N}$ and consider the positive element $a_0 \ov{\mathrm{def}}{=} h_\rho^{\frac{1}{p}}$ belonging to the space $\L^p(\cal{N})$. Then $s(a_0)=1$, and we obtain
\begin{equation}
\label{inter-5001}
s(\tilde{T}(a_0))
\ov{\eqref{support-S-pi}}{=} 
\pi(s(a_0))
=\pi(1)
\ov{\eqref{JRS-factorization-here}}{=} e.
\end{equation}
By Proposition~\ref{prop-bimodule}, for every $z \in \Ran \tilde{T}$ we have $ez=ze=z$. Hence $s_\ell(z) \leq e$ and $s_r(z) \leq e$ for any $z \in \Ran\tilde T$. On the other hand, $s(\tilde T(a_0)) \ov{\eqref{inter-5001}}{=} e$. Consequently, we have
\[
s_\ell(\Ran\tilde T)
=s_r(\Ran\tilde T)=e.
\]
For any $z \in \Ran \tilde{T}$, using $ez=ze=z$, we have
\[
(wz)^*(wz)
=z^*w^*wz
\ov{\eqref{JRS-factorization-here}}{=} z^*ez
=z^*z
\]
and
\[
(wz)(wz)^*
=wzz^*w^*.
\]
For any $z\in\Ran\tilde T$, we deduce with \eqref{support-projections} that
\[
s_r(wz)=s_r(z).
\]
Moreover, since $s_\ell(z) \leq e=w^*w$, we have
\[
s_\ell(wz)
=w s_\ell(z)w^*.
\]
Taking suprema and using $s_\ell(\Ran\tilde T)=s_r(\Ran\tilde T)=e$, we obtain using \eqref{Y-wZ}
\begin{equation}
\label{supports-Y-w}
s_r(\Ran P)
=e
\quad \text{and} \quad
s_\ell(\Ran P)
=ww^*.
\end{equation}
On the other hand, the contractive projection $P \co \L^p(\cal{M}) \to \L^p(\cal{M})$ is positive, hence it preserves adjoints. Thus $\Ran P$ is invariant under the involution. It follows that its left and right support projections coincide. Consequently, \eqref{supports-Y-w} yields
\begin{equation}
\label{w-unitary-corner}
ww^*
= e
\ov{\eqref{JRS-factorization-here}}{=} w^*w.
\end{equation}
Thus $w$ is a unitary element of the reduced von Neumann algebra $e\cal{M}e$. Now, we use the $\sigma$-finiteness assumption in an essential way. Since $P \co \L^p(\cal{M}) \to \L^p(\cal{M})$ is a positive contractive projection and since the von Neumann algebra $\cal{M}$ is $\sigma$-finite, the support result \cite[Proposition 6.1]{ArR24} provides a positive element $h \in \Ran P$ such that
\begin{equation}
\label{full-support-h}
s(h)
=s(P).
\end{equation}
For a positive projection $P$, its support $s(P)$ coincides with the left and right support projections of its range. Hence, by \eqref{supports-Y-w},
\begin{equation}
\label{support-h-e}
s(h)
=e.
\end{equation}
Consider the element $\xi \ov{\mathrm{def}}{=} T^{-1}(h)$ in the space $\L^p(\cal{N})$ and write its polar decomposition in the form
\begin{equation}
\label{polar-decompo}
\xi
=|\xi^*|v,
\end{equation}
where $v \in \cal N$ is a partial isometry. 

At this point we only know that $w$ is a unitary in the corner $e\cal{M}e$. This is not yet sufficient to identify $\Ran T$ with $\Ran\tilde T$. Indeed, for this purpose we need to prove that $w$ belongs to $\pi(\cal{N})$, since $\Ran\tilde T$ is a $\pi(\cal{N})$-bimodule. The next lemma establishes precisely this stronger conclusion. Its proof uses a positive element of $\Ran P$ with full support and the uniqueness of its polar decomposition.

\begin{lemma}
\label{lemma-belongs}
The element $w$ belongs to the von Neumann algebra $\pi(\cal{N})$. 
\end{lemma}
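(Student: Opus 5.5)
We use the positive element $h\in\Ran P$ with $s(h)=e$ and the polar decomposition $\xi=|\xi^*|v$ of $\xi=T^{-1}(h)$. By Proposition~\ref{prop-bimodule} and the module property \eqref{module-map}, applied with $\xi=|\xi^*|v$ and $x=v\in\cal{N}$, we get
\[
h=T(\xi)=w\tilde T(\xi)=w\tilde T(|\xi^*|v)=w\,\tilde T(|\xi^*|)\,\pi(v).
\]
Set $k\ov{\mathrm{def}}{=}\tilde T(|\xi^*|)$. Since $\tilde T$ is positive, $k\geq 0$. Proposition~\ref{prop-supports} gives $s(k)=\pi(s(|\xi^*|))=\pi(s_\ell(\xi))=\pi(vv^*)$. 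Thus $h=wk\pi(v)$, and the plan is to read this as a polar decomposition of the positive operator $h$ and invoke uniqueness.

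First we rewrite $h$ in the form ``positive times partial isometry''. Because $w$ is a unitary of $e\cal{M}e$ by \eqref{w-unitary-corner} and $k=eke$, the operator $wkw^*$ is positive, and
\[
h=(wkw^*)\,(w\pi(v)).
\]
Here $u\ov{\mathrm{def}}{=}w\pi(v)$ is a partial isometry with $u^*u=\pi(v^*v)$ and $uu^*=w\pi(vv^*)w^*$, which is exactly $s(wkw^*)=ws(k)w^*$. Hence $h=|h^*|u$ is a polar decomposition in the form $x=|x^*|u$, with $u$ having initial projection $s_r$ and final projection $s_\ell$. Taking moduli,
\[
hh^*=wkw^*\,uu^*\,wkw^*=(wkw^*)^2,
\]
so $|h^*|=wkw^*$. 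On the other hand $h\ge0$, so $|h^*|=h$. Uniqueness of the polar decomposition of $h$ (for $\tau$-measurable operators in the Haagerup setting) then gives $u=s(h)=e$, that is,
\[
w\pi(v)=e \quad\text{and}\quad wkw^*=h.
\]
Moreover $u^*u=\pi(v^*v)=e=\pi(1)$, so $v^*v=1$ by injectivity of $\pi$. Likewise $uu^*=e$ gives $w\pi(vv^*)w^*=e$, hence $\pi(vv^*)=w^*ew=e$ and $vv^*=1$. So $v$ is a unitary in $\cal{N}$.

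Finally, from $w\pi(v)=e$ we obtain $w=w e=w\pi(v)\pi(v)^*=e\,\pi(v^*)=\pi(v^*)\in\pi(\cal{N})$, since $e=\pi(1)$ is the unit of $\pi(\cal N)$.

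The main obstacle is the uniqueness step. We must check carefully that $wkw^*$ really is the positive part $|h^*|$, which uses that $w$ is unitary in the corner so that $(wkw^*)^2=wk^2w^*$. We must also confirm that the initial and final projections of $u$ match the supports, since only then does uniqueness of the polar decomposition apply in the noncommutative $\L^p$ setting. The support computation from Proposition~\ref{prop-supports} is what guarantees this matching.
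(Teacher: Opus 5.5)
Your proof is correct and follows the paper's argument. It uses the same factorization $h=w\tilde T(|\xi^*|)\pi(v)$, the support identity from Proposition~\ref{prop-supports}, and uniqueness of the polar decomposition of the positive element $h$ with $s(h)=e$ to get $w\pi(v)=e$, hence $v$ unitary and $w=\pi(v^*)$. The only difference is cosmetic: you write $h=|h^*|u$ with $|h^*|=wkw^*$, so matching the supports reduces to conjugation by the corner unitary $w$, whereas the paper writes $h=u|h|$ with $|h|=\pi(v^*)a\pi(v)$ and needs a short kernel argument to show $s(b)=\pi(v^*v)$.
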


\begin{proof}
We first note by Proposition \ref{prop-bimodule} that the subspace $\Ran \tilde{T}$ is a $\pi(\cal{N})$-bimodule. Using the right module identity \eqref{S-right-module} in the last equality, we obtain
\begin{equation}
\label{polar-h-first}
h
=T(\xi)
\ov{\eqref{factorization-T}}{=} w \tilde{T}(\xi)
\ov{\eqref{polar-decompo}}{=} w \tilde{T}(|\xi^*|v)
\ov{\eqref{module-map}}{=} w \tilde{T}(|\xi^*|)\pi(v).
\end{equation}
Consider the positive element $
a
\ov{\mathrm{def}}{=} \tilde{T}(|\xi^*|)$ in the space $\L^p(\cal{M})$. Since the support of the element $|\xi^*|$ is $vv^*$, Proposition \ref{prop-supports} gives
\begin{equation}
\label{support-a}
s(a)
=s\big(\tilde{T}(|\xi^*|)\big)
\ov{\eqref{support-S-pi}}{=} \pi(s(|\xi^*|))
=\pi(vv^*).
\end{equation}
Define
\begin{equation}
\label{def-de-b}
b
\ov{\mathrm{def}}{=} \pi(v^*)a\pi(v).
\end{equation}
Then $b$ is positive. We claim that
\begin{equation}
\label{support-b}
s(b)=\pi(v^*v).
\end{equation}
Indeed, put $U \ov{\mathrm{def}}{=}\pi(v)$. By \eqref{support-a}, we have $s(a)=UU^*$. Moreover,
\[
b=U^*aU=(a^{\frac12}U)^*(a^{\frac12}U).
\]
Hence $s(b)$ is the orthogonal projection onto the complement of $\ker(a^{\frac12}U)$. Since $s(a)=UU^*$, the operator $a^{\frac12}$ is injective on $UU^*H$. Since $U\eta\in UU^*H$ for every $\eta$, we consequently have
\[
a^{\frac12}U\eta=0
\quad\Longleftrightarrow\quad
U\eta=0
\quad\Longleftrightarrow\quad
U^*U\eta=0.
\]
It follows that $s(b)=U^*U=\pi(v^*v)$
%
, proving \eqref{support-b}. Consequently, we have
\begin{equation}
\label{polar-h-second}
h
\ov{\eqref{polar-h-first}}{=} w a \pi(v)
\ov{\eqref{support-a}}{=} w\pi(vv^*)a\pi(v)
=w\pi(v)\pi(v^*)a\pi(v)
\ov{\eqref{def-de-b}}{=} 
w\pi(v)b.
\end{equation}
Put $u \ov{\mathrm{def}}{=} w\pi(v)$. We have
\begin{equation}
\label{polar-2}
h
=ub.
\end{equation}
Using \eqref{w-unitary-corner}, we see that
\begin{equation}
\label{39300}
u^*u 
=\pi(v^*)w^*w\pi(v)
\ov{\eqref{w-unitary-corner}}{=} \pi(v^*)e\pi(v)
\ov{\eqref{JRS-factorization-here}}{=} \pi(v^*)\pi(1)\pi(v)
= \pi(v^*v)
\ov{\eqref{support-b}}{=} s(b).
\end{equation}
So $u$ is a partial isometry whose initial projection is the support of $b$. It follows from \eqref{polar-h-second} and $u = w\pi(v)$ that
\[
h^*h
\ov{\eqref{polar-2}}{=} (ub)^*ub
=bu^*ub
\ov{\eqref{39300}}{=} bs(b)b
=b^2.
\]
Hence $|h|=b$, and \eqref{polar-2} is the polar decomposition of $h$. But the element $h$ is positive and $s(h) \ov{\eqref{support-h-e}}{=} e$. Therefore the partial isometry of its polar decomposition is $e$, and we obtain
\begin{equation}
\label{wpi-v-e}
w\pi(v)
=e.
\end{equation}
Now, we have $\pi(v^*v) \ov{\eqref{support-b}}{=} s(b) = s(|h|) = e$. Since the map $\pi \co \cal{N} \to \cal{M}$ is injective and by \eqref{JRS-factorization-here}, we obtain $v^*v=1$. On the other hand, by \eqref{support-projections}, the left support projection of $w\pi(v)$ is $w\pi(v)(w\pi(v))^*=w\pi(vv^*)w^*$ and the left projection of $e$ is of course $e$. Taking left support projections in \eqref{wpi-v-e}, we obtain
\[
e
=w\pi(vv^*)w^*.
\]
Multiplying on the left by $w^*$ and on the right by $w$ gives $w^*ew=w^*w\pi(vv^*)w^*w$. Using \eqref{w-unitary-corner}, we obtain $e=e\pi(vv^*)e$. Since $\pi(vv^*)\leq \pi(1)=e$, we infer that $e=\pi(vv^*)$. Hence, by injectivity, $vv^*=1$. Thus $v$ is unitary in $\cal N$, and \eqref{wpi-v-e} yields that the element $w=\pi(v^*)$ belongs to $\pi(\cal{N})$.
\end{proof}

We can now prove the main result of this section.

\begin{thm}
\label{thm-positive-projection-ci-range}
Let $\cal M$ be a $\sigma$-finite von Neumann algebra and suppose that $1<p<\infty$ with $p\not=2$. Let $P\co\L^p(\cal M)\to\L^p(\cal M)$ be a positive contractive projection. The following assertions are equivalent.
\begin{enumerate}
	\item $P$ is completely positive.
	\item $\Ran P$ is completely isometric to a noncommutative $\L^p$-space.
\end{enumerate}
\end{thm}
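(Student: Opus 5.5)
The implication 2 $\Rightarrow$ 1 follows the strategy already set up in this section; the implication 1 $\Rightarrow$ 2 is the main result of \cite{ArR24}.

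\textbf{1 $\Rightarrow$ 2.} I would invoke \cite{ArR24} directly: the range of a completely positive contractive projection on $\L^p(\cal M)$, with $\cal M$ $\sigma$-finite, is completely isometric to a noncommutative $\L^p$-space $\L^p(\cal N)$. The only care needed is the trivial case $\Ran P=\{0\}$, which is $\L^p$ of the zero algebra.

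\textbf{2 $\Rightarrow$ 1.} If $\Ran P=\{0\}$, then $P=0$ and there is nothing to prove, so assume $\Ran P\neq\{0\}$. Fix a complete isometry $T\co\L^p(\cal N)\to\L^p(\cal M)$ with $\Ran T=\Ran P$. Lemma \ref{sigma-finite} shows that $\cal N$ is $\sigma$-finite, so the Junge--Ruan--Sherman factorization \eqref{factorization-T} applies. It gives $T=w\tilde T$, where $\tilde T$ is a completely positive complete isometry associated with $\pi$ and $\E$.

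The key input is Lemma \ref{lemma-belongs}. Its proof actually shows that $w=\pi(v^*)$ with $v$ unitary in $\cal N$. Using the left module property \eqref{S-right-module}, I get
\[
T(y)=\pi(v^*)\tilde T(y)=\tilde T(v^*y), \quad y\in\L^p(\cal N).
\]
Since $y\mapsto v^*y$ is a bijection of $\L^p(\cal N)$, this yields $\Ran P=\Ran T=\Ran\tilde T$. Even knowing only $w\in\pi(\cal N)$, the bimodule property gives $\Ran T\subset\Ran\tilde T$; the reverse inclusion would then use unitarity of $w$ in $e\cal M e$ from \eqref{w-unitary-corner}, which gives $\tilde T=w^*T$ and hence $\Ran \tilde T\subset w^*\Ran T\subset \Ran T$, again by bimodularity.

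Next, by the Junge--Ruan--Sherman theorem recalled above, the range of $\tilde T$ is the range of a completely positive completely contractive projection $Q\co\L^p(\cal M)\to\L^p(\cal M)$. Explicitly, $Q$ is the $\L^p$-extension of the conditional-expectation-type map associated with $\E$. Both $P$ and $Q$ are contractive projections onto the same subspace. Since $\L^p(\cal M)$ is smooth for $1<p<\infty$ (its dual $\L^{p^*}(\cal M)$ is uniformly convex, hence strictly convex, and the space is reflexive), Proposition \ref{prop-Sullivan} yields $P=Q$. Hence $P$ is completely positive.

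\textbf{Main obstacle.} The substantive difficulty, that $w$ can be absorbed into $\pi(\cal N)$, has already been handled by Lemma \ref{lemma-belongs}. What remains to check is that the existence of a completely positive contractive projection onto $\Ran\tilde T$ is indeed provided by \cite{JRS05} in the form stated, including when $\pi(1)\neq 1$. In that case I would use
\[
Q\big(h_\psi^{\frac1p}x\big)=h_{\psi\circ\E}^{\frac1p}\E(x)
\]
on a dense set, extended by continuity. I would also confirm the smoothness claim needed to apply Proposition \ref{prop-Sullivan}.
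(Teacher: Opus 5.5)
Your proposal is correct and follows the paper's proof essentially step for step. For $1\Rightarrow2$ you cite \cite{ArR24}; for $2\Rightarrow1$ you use the Junge--Ruan--Sherman factorization $T=w\tilde T$, then Lemma~\ref{lemma-belongs} in the sharper form $w=\pi(v^*)$ with $v$ unitary (exactly Corollary~\ref{cor-unitary-rectification}) to get $\Ran P=\Ran\tilde T$, then the completely positive JRS projection $Q$ onto $\Ran\tilde T$ and Proposition~\ref{prop-Sullivan} to conclude $P=Q$.

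Drop your two optional asides, though, since both contain slips. First, the alternative inclusion $w^*\Ran T\subset\Ran T$ is circular: $w^*\Ran T=\Ran\tilde T$, so it restates the claim. The correct route is $\Ran\tilde T=ww^*\Ran\tilde T\subset w\Ran\tilde T=\Ran T$, using $ww^*=e$ and $w^*\in\pi(\cal N)$. Second, your displayed formula for $Q$ is not idempotent for a general faithful $\psi$; it already fails for the averaging expectation on $\ell^\infty_2$. So simply cite \cite{JRS05} for $Q$, as the paper does.
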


\begin{proof}
If $P=0$, the conclusion is immediate. We henceforth assume that $P \ne 0$.

1. $\Rightarrow$ 2.: Suppose first that $P \co \L^p(\cal{M}) \to \L^p(\cal{M})$ is completely positive. Then $P$ is in particular $2$-positive. By \cite[Theorem~1.1]{ArR24}, the range $\Ran P$ is completely order and completely isometrically isomorphic to a noncommutative $\L^p$-space $\L^p(\cal{N})$ and the von Neumann algebra $\cal N$ arising in this theorem is also $\sigma$-finite. This proves the first implication.

2. $\Rightarrow$ 1.: If $\Ran P=\{0\}$, then $P=0$ and the conclusion is immediate. Suppose therefore that $\Ran P\not=\{0\}$. By the second point there exist a von Neumann algebra $\cal{N}$ and a surjective complete isometry $T\co\L^p(\cal{N})\to\Ran P$. We use the notation introduced before \eqref{Y-wZ}. As shown in Lemma \ref{sigma-finite}, $\cal{N}$ is  $\sigma$-finite. Note that the subspace $\Ran \tilde{T}$ is a $\pi(\cal{N})$-bimodule by Proposition \ref{prop-bimodule} and that $w$ belongs to the von Neumann algebra $\pi(\cal{N})$ according to Lemma \ref{lemma-belongs}. We deduce that
\begin{equation}
\label{inter-3500}
w\Ran \tilde{T}
=\Ran \tilde{T}.
\end{equation}
Combining this with \eqref{Y-wZ}, we conclude that
\begin{equation}
\label{Y-equals-Z}
\Ran P
\ov{\eqref{Y-wZ}}{=} w\Ran \tilde{T}
\ov{\eqref{inter-3500}}{=}\Ran \tilde{T}.
\end{equation}
Since $\tilde{T} \co \L^p(\cal{N}) \to \L^p(\cal{M})$ is a positive complete isometry, \cite[Theorem 4.9 p.~308 and Remark 2 p.~310]{JRS05} provides a completely positive contractive projection $Q \co \L^p(\cal{M}) \to \L^p(\cal{M})$ such that $\Ran Q = \Ran \tilde{T}$. By \eqref{Y-equals-Z}, we conclude that
\[
\Ran Q
=\Ran P
\ov{\eqref{Y-equals-Z}}{=} \Ran \tilde{T}.
\]
Recall that by \cite[Corollary 5.2 p.~1480]{PiX03} the Banach space $\L^p(\cal{M})$ is smooth for $1 < p < \infty$. It remains only to identify $P$ and $Q$ with Proposition \ref{prop-Sullivan}. Hence $P=Q$. Since the map $Q$ is completely positive, so is $P$.
\end{proof}

Combining with the results of \cite{ArR24} and \cite{JRS05}, we deduce immediately the next result.

\begin{cor}
\label{cor-2-isometry-characterization}
Let $\cal{M}$ be a $\sigma$-finite von Neumann algebra and suppose that $1<p<\infty$ with $p\not=2$. Let $P\co\L^p(\cal{M})\to\L^p(\cal{M})$ be a positive contractive projection. The following assertions are equivalent.
\begin{enumerate}
\item The projection $P$ is completely positive.
\item The projection $P$ is $2$-positive.
\item The range $\Ran P$ is completely isometric to a noncommutative $\L^p$-space.
\item There exist a von Neumann algebra $\cal{N}$ and a surjective $2$-isometry $T\co\L^p(\cal{N})\to\Ran P$.
\item The range $\Ran P$, equipped with the order inherited from $\L^p(\cal{M})$, is completely order and completely isometrically isomorphic to a noncommutative $\L^p$-space.
\end{enumerate}
\end{cor}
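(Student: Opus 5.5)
I will prove the corollary through the cycle $1 \Rightarrow 2 \Rightarrow 5 \Rightarrow 3 \Rightarrow 4 \Rightarrow 1$, using Theorem~\ref{thm-positive-projection-ci-range} and the results of \cite{ArR24} and \cite{JRS05}. As in the theorem, the case $P=0$ is trivial, so I assume $\Ran P\not=\{0\}$.

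The implications up to $3 \Rightarrow 4$ are essentially formal.
\begin{enumerate}
\item[(a)] $1 \Rightarrow 2$ holds because a completely positive map is $2$-positive.
\item[(b)] $2 \Rightarrow 5$ is exactly \cite[Theorem~1.1]{ArR24}, already used in the proof of Theorem~\ref{thm-positive-projection-ci-range}. It states that the range of a $2$-positive contractive projection is completely order and completely isometrically isomorphic to some $\L^p(\cal{N})$.
\item[(c)] $5 \Rightarrow 3$ follows by forgetting the order structure.
\item[(d)] $3 \Rightarrow 4$ holds because a surjective complete isometry $T\co\L^p(\cal{N})\to\Ran P$ is in particular a surjective $2$-isometry.
\end{enumerate}

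The only substantive step is $4 \Rightarrow 1$. Here I rerun the proof of Theorem~\ref{thm-positive-projection-ci-range}, noting that it never used more of $T$ than its $2$-isometric character. The Junge--Ruan--Sherman classification \cite[Theorem~2 p.~287]{JRS05}, \cite[Remark~2 p.~310]{JRS05} is stated precisely for maps $T$ such that $\Id\ot T$ is isometric on $\L^p(\M_2\otvn\cal{N})$, that is, for $2$-isometries. So a surjective $2$-isometry $T$ still factors as $T=w\tilde T$. In this factorization $e=\pi(1)=w^*w$, $F=\E|_{e\cal Me}$ is faithful, and $\tilde T=w^*T$ is a positive $2$-isometry. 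By the discussion on positive $2$-isometries, $\tilde T$ is then automatically a completely positive complete isometry.

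From this point on, nothing in Lemma~\ref{sigma-finite}, Propositions~\ref{prop-supports} and~\ref{prop-bimodule}, the support computation \eqref{supports-Y-w}--\eqref{w-unitary-corner}, or Lemma~\ref{lemma-belongs} refers to complete isometry of $T$ itself. These arguments use only the factorization, the positivity of $P$, and the properties of $\tilde T$. Hence $\Ran P=\Ran\tilde T$. By \cite[Theorem~4.9]{JRS05}, this space is the range of a completely positive contractive projection $Q$. Proposition~\ref{prop-Sullivan} applies because $\L^p(\cal M)$ is smooth, and it gives $P=Q$, so $P$ is completely positive.

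The main point to check carefully is that the input of \cite{JRS05} needed in the argument really only requires a $2$-isometry. This concerns both the existence of the factorization with the partial isometry $w$ and the faithfulness of $F$. Once that is confirmed, the rest is a verbatim repetition of the proof of Theorem~\ref{thm-positive-projection-ci-range}. An alternative route for $4\Rightarrow 3$ would be to note that, by \cite{JRS05}, the factorization forces $T=w\tilde T$ with $\tilde T$ a complete isometry, hence $T$ itself is a complete isometry. Then $4\Rightarrow 3$ follows directly and Theorem~\ref{thm-positive-projection-ci-range} gives $3\Rightarrow 1$. I would present this second route, since it isolates the use of \cite{JRS05} in a single sentence.
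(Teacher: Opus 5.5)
Your proof is correct and follows the deduction the paper intends: the trivial implications $1\Rightarrow2$ and $5\Rightarrow3\Rightarrow4$, \cite[Theorem~1.1]{ArR24} for $2\Rightarrow5$, and the Junge--Ruan--Sherman fact that for $p\neq2$ a $2$-isometry factors as $T=w\tilde T$ with $w^*w=\pi(1)$ and $\tilde T$ a complete isometry with $\pi(1)\tilde T(x)=\tilde T(x)$, hence is itself a complete isometry, for $4\Rightarrow3$, followed by Theorem~\ref{thm-positive-projection-ci-range} for $3\Rightarrow1$. Presenting your second route is the right choice, and the fact that $\cal N$ in assertion 4 is not assumed $\sigma$-finite causes no trouble, since Lemma~\ref{sigma-finite} inside the theorem's proof already handles it.
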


The proof of Theorem \ref{thm-positive-projection-ci-range} equally shows the following result.

\begin{cor}
\label{cor-unitary-rectification}
Let $\cal{M}$ be a $\sigma$-finite von Neumann algebra and suppose that $1<p<\infty$ with $p\not=2$. Let $P\co\L^p(\cal{M})\to\L^p(\cal{M})$ be a positive contractive projection and suppose that $
T\co\L^p(\cal{N})\to \Ran P$ is a surjective complete isometry. Then there exists a unitary $v\in\cal{N}$ such that the map $S \co \L^p(\cal{N}) \to \Ran P$, $x \mapsto T(vx)$, is a completely positive complete isometry. In particular, every complete isometry onto $\Ran P$ differs from a completely positive complete isometry by left multiplication by a unitary of the source algebra.
\end{cor}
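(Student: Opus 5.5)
The natural candidate is the unitary produced in the proof of Lemma~\ref{lemma-belongs}. We rerun the setup preceding that lemma for the given surjective complete isometry $T\co\L^p(\cal N)\to\Ran P$. If $\Ran P=\{0\}$ then $\cal N=\{0\}$ and there is nothing to prove, so assume $\Ran P\neq\{0\}$. The Junge--Ruan--Sherman theorem gives $\pi$, $w$, $\E$ with $e=\pi(1)=w^*w$, $T=w\tilde T$, and $\tilde T$ a completely positive complete isometry. Lemma~\ref{sigma-finite} shows that $\cal N$ is $\sigma$-finite, and \eqref{w-unitary-corner} shows that $w$ is a unitary of $e\cal Me$. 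The proof of Lemma~\ref{lemma-belongs}, carried out with the full-support positive element $h\in\Ran P$ and the polar decomposition $T^{-1}(h)=|\xi^*|v$, gives a unitary $v\in\cal N$ with $w=\pi(v^*)$.

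With this $v$, we check that $S(x)\ov{\mathrm{def}}{=}T(vx)$ coincides with $\tilde T$. By \eqref{factorization-T} and the left module identity \eqref{S-right-module} of Proposition~\ref{prop-bimodule},
\[
T(vx)=w\tilde T(vx)=w\pi(v)\tilde T(x)=\pi(v^*)\pi(v)\tilde T(x)=\pi(1)\tilde T(x)=e\tilde T(x)=\tilde T(x),
\]
where the last equality uses $ez=z$ for $z\in\Ran\tilde T$, again by Proposition~\ref{prop-bimodule}. Hence $S=\tilde T$ is a completely positive complete isometry. Its range is $\Ran T=\Ran P$ because left multiplication by the unitary $v$ is a surjective complete isometry of $\L^p(\cal N)$; this is also consistent with \eqref{Y-equals-Z}. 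The final sentence of the statement follows by writing $T(y)=S(v^*y)$, so $T$ is $S$ precomposed with left multiplication by the unitary $v^*$.

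The only point needing care is that the unitary must be expressed in the source algebra $\cal N$. The lemma's proof does produce $v\in\cal N$, with $v^*v=vv^*=1$ obtained from the injectivity of $\pi$, so no further work is required beyond tracking that identification. I expect no real obstacle; the main risk is a mismatch between the left and right module conventions in \eqref{module-map} and \eqref{S-right-module}, which must be applied on the correct side.
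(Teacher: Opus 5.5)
Your proposal is correct and follows essentially the same route as the paper: you extract from the proof of Lemma~\ref{lemma-belongs} a unitary $v\in\cal{N}$ with $w=\pi(v^*)$, i.e.\ $w\pi(v)=\pi(1)$, and then use the left $\pi(\cal{N})$-module property \eqref{S-right-module} of $\tilde T$ to get $T(vx)=w\pi(v)\tilde T(x)=\pi(1)\tilde T(x)=\tilde T(x)$, so that $S=\tilde T$. Your extra remarks (the trivial case, the range of $S$, and $T(y)=S(v^*y)$) are harmless additions to the paper's argument.
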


\begin{proof}
We may assume that $\Ran P\not=\{0\}$. Apply the Junge--Ruan--Sherman factorization to the complete isometry $T$. With the notation used in the proof of Theorem~\ref{thm-positive-projection-ci-range}, there exist an injective normal $*$-homomorphism $\pi\co\cal{N}\to\cal{M}$, a partial isometry $w\in\cal{M}$, and a completely positive complete isometry $\tilde T\co\L^p(\cal{N})\to\L^p(\cal{M})$ such that $T=w\tilde{T}$ with $w^*w=\pi(1)$. The proof of Lemma~\ref{lemma-belongs} shows more precisely that there exists a unitary $v\in\cal{N}$ such that
\begin{equation}
\label{unitary-rectification-relation}
w\pi(v)=\pi(1).
\end{equation}
By Proposition~\ref{prop-bimodule}, the map $\tilde T$ is a left $\pi(\cal{N})$-module map. Hence, for every $x\in\L^p(\cal{N})$,
\[
T(vx)
=w\tilde T(vx)
\ov{\eqref{S-right-module}}{=} w \pi(v)\tilde T(x)
\ov{\eqref{unitary-rectification-relation}}{=}
\pi(1)\tilde T(x)
=\tilde T(x).
\]
Thus the map $S\co\L^p(\cal{N})\to\Ran P$, $x\mapsto T(vx)$, coincides with $\tilde T$. Consequently, $S$ is a completely positive complete isometry.
\end{proof}

We conclude this section with two examples showing that both the positivity of the projection and the complete nature of the isometry assumption are essential.

\begin{remark} \normalfont
\label{rem-positivity-essential}
The positivity assumption on the projection in Theorem~\ref{thm-positive-projection-ci-range} cannot be removed. Indeed, consider the von Neumann algebra $\cal M=\M_2$ and the map $P\co S_2^p \to S_2^p$ defined by 
$$
P(x)
\ov{\mathrm{def}}{=} e_{11} x e_{22}.
$$
Then $P^2=P$ and $P$ is completely contractive, since it is the composition of left and right multiplication by projections. Its range is $
\Ran P
=\mathbb{C} e_{12}$, which is completely isometric to $\L^p(\mathbb{C})=\mathbb C$. More precisely, for any integer $n\geq1$, the map $S_n^p \to \L^p(\M_n\otvn\M_2)$,  $a \mapsto a\ot e_{12}$, is an isometry. However, $P$ is not positive. For instance, if we consider the positive element
$
x=
\begin{bmatrix}
1&1\\
1&1
\end{bmatrix}$, we have $P(x)=e_{12}$ and $e_{12}$ is not positive.

Thus, the abstract complete isometry class of the range does not determine the order structure inherited from the ambient noncommutative $\L^p$-space. In contrast, if one assumes that there exists a positive complete isometry $T\co\L^p(\cal N)\to\L^p(\cal M)$ with $\Ran T = \Ran P$, then the positivity of $P$ is no longer needed. Indeed, the Junge--Ruan--Sherman theorem yields a completely positive contractive projection $Q$ onto $\Ran T$. Since $\Ran Q=\Ran P$ and $\L^p(\cal M)$ is smooth for $1 < p < \infty$, Proposition~\ref{prop-Sullivan} implies $P=Q$. Hence $P$ is completely positive.
\end{remark}

\begin{remark} \normalfont
\label{rem-complete-isometry-essential}
The complete isometry assumption in Theorem~\ref{thm-positive-projection-ci-range} cannot be replaced by an isometry, even if the latter preserves the order structure. Indeed, let $n\geq2$, let $\cal{M}=\M_n\oplus\mathord{\M_n}$ and define $\alpha\co\L^p(\cal{M})\to\L^p(\cal{M})$ by 
$$
\alpha(x,y)
\ov{\mathrm{def}}{=} (y^{\top},x^{\top}).
$$
Since the transpose is a positive isometry on $S_n^p$, the map $\alpha$ is a positive isometric involution. Consequently, $
P\ov{\mathrm{def}}{=}\frac{1}{2}(\Id+\alpha)$ is a positive contractive projection and we have
\[
\Ran P
=\{(x,x^{\top}):x\in S_n^p\}.
\]
Moreover, the map $T \co S_n^p \to \Ran P$ defined by
\[
T(x)
\ov{\mathrm{def}}{=} 2^{-\frac1p}(x,x^{\top}),
\]
is a surjective positive isometry whose inverse is also positive. Thus $\Ran P$ is order isometrically isomorphic to the noncommutative $\L^p$-space $S_n^p$.

On the other hand, $P$ is not completely positive. Indeed, consider the positive element $q=\sum_{i,j=1}^n e_{ij}\otimes e_{ij}$ in the algebra $\M_n \ot \M_n$. Then $(0,q)$ is positive in $\M_n\otimes\cal{M}$, whereas the first component of $(\Id_{\M_n}\otimes P)(0,q)$ is $\frac12\sum_{i,j=1}^n e_{ij} \ot e_{ji}$, which is not positive. Hence $P$ is not completely positive. This shows that neither the Banach space structure nor the order structure of $\Ran P$ at the scalar level is sufficient in Theorem~\ref{thm-positive-projection-ci-range}. The complete isometry assumption is essential.
\end{remark}

\section{Subspaces completely isometric to a rectangular $\L^p$-space}
\label{sec-rectangular-spaces}

The aim of this section is to prove a rectangular analogue of the result of the previous section. The main difference is that a rectangular noncommutative $\L^p$-space is not itself an ordinary noncommutative $\L^p$-space. We overcome this difficulty by stabilization: after amplification by $S^p$, the rectangular space becomes completely isometric to an ordinary noncommutative $\L^p$-space, to which the Junge--Ruan--Sherman theorem applies.

Following \cite{Arh24b} (see also \cite[p.~869]{KaR02}), we define the rectangular $\L^p$-spaces of $\W^*$-$\TRO$s using Kosaki noncommutative $\L^p$-spaces of \cite{Kos84} \cite{Ray03}, relying on the theory of complex interpolation \cite{BeL76}. Let $V$ be a $\W^*$-$\TRO$ with linking von Neumann algebra $\mathrm{R}(V)$. Suppose  that the von Neumann algebra $\mathrm{R}(V)$ is $\sigma$-finite equipped with a normal faithful state $\varphi$. Suppose that $1 \leq p < \infty$. We define the rectangular $\L^p$-space $\L^p(V,\varphi)$ to be the norm closure of $V=e \mathrm{R}(V)e^\perp$ in the Kosaki noncommutative $\L^p$-space $\L^p(\mathrm{R}(V),\varphi)$. Sometimes, we use the notation $\L^p(V)$. It is easy to check that $\L^p(V,\varphi)=e\L^p(R(V),\varphi)e^\perp$ since $R(V)$ is dense in the Banach space $\L^p(R(V),\varphi)$. Thus the rectangular $\L^p$-space associated with $V$ is exactly the $\L^p$-analogue of the off-diagonal corner realizing $V$ inside its linking von Neumann algebra. We let $\L^\infty(V,\varphi) \ov{\mathrm{def}}{=} V$.


Decomposable maps are a generalization of completely positive maps. Recall that a linear map $T \co \L^p(\cal{M}) \to \L^p(\cal{N})$ between noncommutative $\L^p$-spaces associated to von Neumann algebras $\cal{M}$ and $\cal{N}$ is decomposable \cite[(3.2)]{JuR04} if there exist bounded linear maps $v_1,v_2 \co \L^p(\cal{M}) \to \L^p(\cal{N})$ such that the linear map
\begin{equation}
\label{Matrice-2-2-Phi-intro}
\Phi
\ov{\mathrm{def}}{=}\begin{bmatrix}
   v_1  &  T \\
   T^\circ  &  v_2  \\
\end{bmatrix}
\co S^p_2(\L^p(\cal{M})) \to S^p_2(\L^p(\cal{N})), \quad \begin{bmatrix}
   a  &  b \\
   c &  d  \\
\end{bmatrix}\mapsto 
\begin{bmatrix}
   v_1(a)  &  T(b) \\
   T^\circ(c)  &  v_2(d)  \\
\end{bmatrix}
\end{equation}
is completely positive, where $T^\circ(c) \ov{\mathrm{def}}{=} T(c^*)^*$.  In this case, we let
\begin{equation}
\label{Norm-dec-intro}
\norm{T}_{\dec,\L^p(\cal{M}) \to \L^p(\cal{N})}
\ov{\mathrm{def}}{=} \inf \max\{\norm{v_1},\norm{v_2}\}
\end{equation}
where the infimum is taken over all maps $v_1$ and $v_2$. We say that $T$ is contractively decomposable if $\norm{T}_{\dec,\L^p(\cal{M}) \to \L^p(\cal{N})} \leq 1$. Note that if the von Neumann algebras $\cal{M}$ and $\cal{N}$ are hyperfinite, it is equivalent to saying that $T$ is contractively regular by \cite[Theorem 3.24]{ArK23}, which means that for any operator space $E$, the map $T \ot \Id_E$ induces a contraction between the vector-valued noncommutative $\L^p$-spaces $\L^p(\cal{M},E)$ and $\L^p(\cal{N},E)$.

By \cite[Theorem~1.3]{Bof26}, the range of a contractively decomposable projection is completely isometrically isomorphic to a corner $e\L^p(\cal N)e^\perp$. Conversely, \cite[Proposition~5.2]{Arh24b} shows that a canonical corner $e\L^p(\cal{N})e^\perp$ is contractively decomposably complemented in $\L^p(\cal{N})$. The point below is that the same conclusion remains true for an arbitrary completely isometric copy of such a corner inside another noncommutative $\L^p$-space. In other words, contractive decomposable complementability depends only on the complete isometry class of the rectangular $\L^p$-space and not on its particular realization as a corner.

The proof relies on a stabilization argument. After amplification by $S^p$, a rectangular $\L^p$-space becomes completely isometric to an ordinary noncommutative $\L^p$-space. We may then apply the classification of $2$-isometries of Junge, Ruan and Sherman. Their construction yields a contractively decomposable projection onto the stabilized copy. Compressing this projection to a matrix corner gives a contractively decomposable projection onto the original copy. Notice that the complete isometry assumption is essential in this argument, since it is precisely what allows us to pass to the $S^p$-amplification.

\begin{thm}
\label{thm-characterization-range-decomposable-rectangular}
Let $\cal M$ be a $\sigma$-finite von Neumann algebra with separable predual equipped with a normal faithful state and suppose that $1 < p  <\infty$ with $p \not= 2$. Let $Y \not=\{0\}$ be a closed subspace of $\L^p(\cal{M})$. The following assertions are equivalent.
\begin{enumerate}
\item The subspace $Y$ is the range of a contractively decomposable projection on the Banach space $\L^p(\cal M)$.

\item There exist a $\sigma$-finite von Neumann algebra $\cal N$ equipped with a normal faithful positive linear form $\psi$, a projection $e \in \cal{N}$, and a surjective complete isometry $T \co e\L^p(\cal{N},\psi)e^\perp \to Y$.

\item There exist a $\W^*$-$\TRO$ $V$ whose linking von Neumann algebra $\mathrm{R}(V)$ is $\sigma$-finite, a normal faithful state $\varphi$ on $\mathrm{R}(V)$, and a surjective complete isometry $
T \co \L^p(V,\varphi) \to Y$.

\end{enumerate}
\end{thm}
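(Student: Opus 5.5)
The plan is to prove the cycle $1 \Rightarrow 3 \Rightarrow 2 \Rightarrow 1$, with the substance concentrated in $2 \Rightarrow 1$.

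\emph{Implication $1 \Rightarrow 3$.} This is exactly \cite[Theorem~1.3]{Bof26}. It says that the range of a contractively decomposable projection on the $\sigma$-finite space $\L^p(\cal M)$ is completely isometric to a corner $e\L^p(\cal N)e^\perp$. Such a corner is $\L^p(V)$ for the $\W^*$-$\TRO$ $V=e\cal N e^\perp$. To see this, one identifies the linking algebra $\mathrm{R}(V)$ with $c\cal N c$, where $c=c_V(e)\vee c_V(e^\perp)$ is the support of $V$ on each side. One then checks that the $\L^p$-norms match after restricting and renormalising the state. A possible normalisation issue is harmless, since it only rescales by a constant and can be absorbed.

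\emph{Implication $3 \Rightarrow 2$.} This is immediate. By definition $\L^p(V,\varphi)=e\L^p(\mathrm{R}(V),\varphi)e^\perp$, so one takes $\cal N=\mathrm{R}(V)$ and $\psi=\varphi$.

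\emph{Implication $2 \Rightarrow 1$: stabilisation.} Let $T\co e\L^p(\cal N)e^\perp\to Y$ be a surjective complete isometry. First I reduce to the case where $e$ and $e^\perp$ both have central support $1$, by cutting $\cal N$ down to the linking algebra of $e\cal N e^\perp$. Then I stabilise by tensoring with $\B(\ell^2)$. In $\cal N\otvn\B(\ell^2)$, the projections $e\ot 1$ and $e^\perp\ot 1$ are properly infinite with central support $1$. The algebra is locally countably decomposable, and in fact $\sigma$-finite because $\cal N$ is $\sigma$-finite. Proposition~\ref{prop-Blackadar} then gives partial isometries $u_1,u_2$ with $u_1^*u_1=e\ot1$, $u_2^*u_2=e^\perp\ot 1$ and $u_1u_1^*=u_2u_2^*=1$. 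The map $x\mapsto u_1xu_2^*$ is then a complete isometry from $S^p(e\L^p(\cal N)e^\perp)=(e\ot1)\L^p(\cal N\otvn\B(\ell^2))(e^\perp\ot1)$ onto the full space $\L^p(\cal N\otvn \B(\ell^2))$. Composing with $\Id_{S^p}\ot T$, which is a complete isometry because $T$ is, we obtain a complete isometry
\[
J\co\L^p(\cal N\otvn\B(\ell^2))\to S^p(\L^p(\cal M))=\L^p(\B(\ell^2)\otvn\cal M)
\]
with range $S^p(Y)$. The weights on these algebras need care: I would use semifinite or weighted versions, or replace $\B(\ell^2)$ by a $\sigma$-finite substitute with a faithful state such as $\sum 2^{-n}\omega_{nn}$. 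The Kosaki $\L^p$-space is independent of the state up to complete isometry, so this is allowed.

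\emph{Implication $2 \Rightarrow 1$: building and compressing the projection.} Apply the Junge--Ruan--Sherman theorem to the $2$-isometry $J$. It factors as $J=w\tilde J$, where $\tilde J$ is completely positive and $\Ran\tilde J$ is the range of a completely positive contractive projection $Q$ onto $\Ran \tilde J$. Then
\[
\widehat P(x)=wQ(w^*x)
\]
is a contractive projection onto $w\Ran\tilde J=S^p(Y)$. It is contractively decomposable, because
\[
\begin{bmatrix} ww^*Q(ww^*\cdot\,)ww^* & \widehat P\\ \widehat P^\circ & Q\end{bmatrix}
\]
can be written as the compression of $\Id_{\M_2}\ot Q$ by the column $\begin{bmatrix} w & 0\\ 0& 1\end{bmatrix}$. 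I will check that this compression is completely positive and that both diagonal entries are contractions. Finally, compress by $e_{11}\ot 1$: set
\[
P(y)=(e_{11}\ot1)\,\widehat P(e_{11}\ot y)\,(e_{11}\ot 1),
\]
viewed in $\L^p(\cal M)$. Since $\widehat P$ fixes $e_{11}\ot Y\subset S^p(Y)$, and $S^p(Y)$ is stable under the corner compression, $P$ is a projection onto $Y$. Decomposability and the bound $\norm{P}_{\dec}\le 1$ survive the compression, since completely positive corner maps preserve them.

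\emph{Main obstacle.} I expect the hardest step to be the stabilisation isomorphism: arranging the central supports so that Proposition~\ref{prop-Blackadar} applies, and handling the states so that all the $\L^p$-identifications are genuinely complete isometries between Kosaki spaces. The construction of the projection itself is essentially formal once that is in place.
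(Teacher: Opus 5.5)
Your proposal is correct and follows essentially the paper's route: reduction to $c(e)=c(e^\perp)=1$, stabilisation by $\B(\ell^2)$ with Proposition~\ref{prop-Blackadar}, the Junge--Ruan--Sherman factorisation, transport of the completely positive projection via $x\mapsto wQ(w^*x)$, and compression to the $(1,1)$-corner. The differences are cosmetic: you use two co-isometries to map onto the full algebra rather than one partial isometry between $f$ and $f^\perp$, you write down the $2\times 2$ completely positive matrix directly instead of citing \cite[Lemma 5.1]{Arh24b} and submultiplicativity of $\norm{\cdot}_{\dec}$, and you apply Proposition~\ref{prop-Blackadar} via $\sigma$-finiteness of $\cal N\otvn\B(\ell^2)$, which is exactly the hypothesis it needs (the separable-predual assumption concerns $\cal M$, not $\cal N$).

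There are two slips to fix. First, conjugating $\Id_{\M_2}\ot Q$ by $\mathrm{diag}(w,1)$ gives the $(1,1)$-entry $a\mapsto wQ(w^*aw)w^*$, not $ww^*Q(ww^*a)ww^*$. Second, you should state explicitly that $w^*wz=z$ for $z\in\Ran\tilde J$, which follows from the bimodule property of $\tilde J$; this is what makes $\widehat P$ fix $w\Ran\tilde J$.
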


\begin{proof}
1. $\Rightarrow$ 2.: This implication follows from \cite[Theorem~1.3]{Bof26}. 

2. $\Rightarrow$ 1.: Let $Y$ be a closed subspace of the Banach space $\L^p(\cal{M})$ and suppose that there exists a surjective complete isometry $T \co e\L^p(\cal{N})e^\perp \to Y$. Since noncommutative $\L^p$-spaces are independent, up to complete order isometry, of the choice of the reference weight, we suppress the weights in the rest of the proof. The proof proceeds in three steps. We first stabilize the rectangular source and identify it with an ordinary noncommutative $\L^p$-space. We then construct a contractively decomposable projection onto the stabilized copy $S^p(Y)$. Finally, we compress this projection to the $(1,1)$-matrix corner in order to obtain a projection onto $Y$.

We first remove the part of $\cal{N}$ which is invisible to the rectangular corner. This reduction ensures that both corner projections have full central support, a property which will be needed after stabilization to make them equivalent to the unit. We first reduce to the case where $e$ and $e^\perp$ have full central support. Let $c(e)$ and $c(e^\perp)$ denote their central supports in $\cal N$ and set $z \ov{\mathrm{def}}{=} c(e)c(e^\perp)$. Any $x \in e\L^p(\cal{N})e^\perp$ satisfies $x=zx$, and hence
\[
e\L^p(\cal N)e^\perp
=ze\L^p(z\cal{N})ze^\perp.
\]
Replacing $\cal{N}$ by $z\cal{N}$, we may consequently assume that 
\begin{equation}
\label{support-central}
c(e)=c(e^\perp)=1
\end{equation}

Now, we stabilize by tensoring with $\B(\ell^2)$. The purpose of this amplification is to turn the two full corner projections into properly infinite projections. They will therefore become Murray--von Neumann equivalent, allowing us to convert the rectangular corner into a square one.

\begin{lemma}
The projections $f \ov{\mathrm{def}}{=} 1 \ot e$ and $f^\perp$ have central support $1$ in the von Neumann algebra $\B(\ell^2) \otvn \cal{N}$, and both are properly infinite. 
\end{lemma}

\begin{proof}
We have
\[
c_{\B(\ell^2) \otvn \cal N}(f)
=1\ot c_{\cal N}(e)
=1_{\B(\ell^2) \otvn \cal N}
\quad\text{and}\quad
c_{\B(\ell^2) \otvn \cal N}(f^\perp)
=1\ot c_{\cal N}(e^\perp)=1_{\B(\ell^2) \otvn \cal N}.
\]
We next note that $f$ and $f^\perp$ are properly infinite. Let $s_1,s_2 \in \B(\ell^2)$ be two isometries with orthogonal ranges, so that $s_i^*s_i=1$ and $s_1s_1^*\perp s_2s_2^*$. For $i=1,2$, put $v_i=s_i \ot e$. Then
\[
v_i^*v_i
=1 \ot e=f,
\qquad 
v_iv_i^*
=s_is_i^*\ot e
\leq f
\]
and $v_1v_1^*\perp v_2v_2^*$. Thus $f$ contains two orthogonal subprojections, each Murray--von Neumann equivalent to $f$, and hence $f$ is properly infinite. Replacing $e$ by $e^\perp$ gives the same conclusion for $1-f$.
\end{proof}

Since the von Neumann algebra $\B(\ell^2) \otvn \cal{N}$ has a separable predual, by Proposition \ref{prop-Blackadar}, we have $f \sim 1_{\B(\ell^2) \otvn \cal{N}} \sim f^\perp$. Choose a partial isometry $u \in \B(\ell^2) \otvn \cal{N}$ such that
\begin{equation}
\label{partial-iso-39}
uu^*
=f
\quad \text{and} \quad 
u^*u
=f^\perp.
\end{equation}
Multiplication by $u$ therefore identifies the square corner supported by $f$ with the off-diagonal corner from $f$ to $f^\perp$. This is the basic mechanism which turns the stabilized rectangular $\L^p$-space into an ordinary noncommutative $\L^p$-space.

\begin{lemma}
The map
\[
\Theta \co \L^p(f(\B(\ell^2) \otvn \cal N) f) \to f\L^p(\B(\ell^2) \otvn \cal{N})f^\perp,\quad a \mapsto au,
\]
is a surjective complete isometry. 
\end{lemma}

\begin{proof}
Note the identifications $\L^p(f(\B(\ell^2) \otvn \cal N) f) = f\L^p(\B(\ell^2) \otvn \cal N)f $ and $\L^p(f(\B(\ell^2) \otvn \cal{N})f^\perp)=f\L^p(\B(\ell^2) \otvn \cal{N})f^\perp$. The multiplication operator $\Theta $is clearly contractive and even completely contractive. Since $xu^*u \ov{\eqref{partial-iso-39}}{=} xf^\perp=x$ for any $x \in f\L^p(\B(\ell^2) \otvn \cal{N})f^\perp$ and $yuu^* \ov{\eqref{partial-iso-39}}{=} yf=y$ for any $y \in f\L^p(\B(\ell^2) \otvn \cal N)f$, its inverse is $\Theta^{-1} \co f\L^p(\B(\ell^2) \otvn \cal{N})f^\perp \to \L^p(f(\B(\ell^2) \otvn \cal N) f)$.  $a \mapsto au^*$. This map is also completely contractive. The conclusion is obvious, see also \cite[(1.2.7)]{BLM04}.
\end{proof}

We next identify this off-diagonal corner with the $S^p$-amplification of the original rectangular space. By the Fubini identification for vector-valued Schatten spaces \cite[(3.6) p.~40]{Pis98}, we have $
S^p(\L^p(\cal N))
=\L^p(\B(\ell^2) \otvn \cal{N})$ completely isometrically. Taking the corresponding corners gives
\begin{equation}
\label{rectangular-stabilization}
S^p(e\L^p(\cal{N})e^\perp)
=f\L^p(\B(\ell^2) \otvn \cal{N}))f^\perp.
\end{equation}
Combining this identification with the preceding lemma, we have therefore transformed the stabilized rectangular space into an ordinary noncommutative $\L^p$-space.

Since $T \co e\L^p(\cal{N})e^\perp \to Y$ is a complete isometry, by \cite[Corollary 1.2 p.~19]{Pis98} its $S^p$-amplification
\[
\Id_{S^p} \ot T \co S^p(e\L^p(\cal{N}) e^\perp) \to S^p(Y)
\]
is a complete isometry. Hence by composition
\[
\widetilde{T}
\ov{\mathrm{def}}{=}(\Id_{S^p}\ot T)\Theta \co \L^p(f(\B(\ell^2) \otvn \cal{N}) f) \to S^p(Y)
\]
is a surjective complete isometry. Using again the Fubini identification, we regard the space $S^p(Y)$ as a subspace of the space $
S^p(\L^p(\cal M))
=\L^p(\B(\ell^2) \otvn \cal{M})$. Thus $\widetilde{T}$ is a complete isometry from the noncommutative $\L^p$-space $\L^p(f(\B(\ell^2) \otvn \cal{N}) f)$ into the space $\L^p(\B(\ell^2) \otvn \cal{M})$, with
\begin{equation}
\label{range-stabilized-T}
\Ran\widetilde T
=S^p(Y).
\end{equation}
Now, we are now in the setting of the Junge--Ruan--Sherman theorem: $\widetilde T$ is defined on an ordinary noncommutative $\L^p$-space and its range is precisely the stabilized subspace $S^p(Y)$. We may therefore apply \cite[Theorem~2 and Remark~2]{JRS05}. Since $p\not=2$, there exist a partial isometry $w \in \B(\ell^2) \otvn \cal{M}$ and a completely positive complete isometry
\[
V \co \L^p(f(\B(\ell^2) \otvn \cal{N}) f) \to \L^p(\B(\ell^2) \otvn \cal{M})
\]
such that $\widetilde{T}=\Mult_{w,1}V$, where $\Mult_{w,1}(x)=wx$. Moreover, there exists a completely positive contractive projection $R \co \L^p(\B(\ell^2) \otvn \cal{M}) \to \L^p(\B(\ell^2) \otvn \cal{M})$ whose range is the subspace $\Ran V$.  Put $e_0 \ov{\mathrm{def}}{=} w^*w$. We consider the multiplication operators $\Mult_{w,1}\co\L^p(\B(\ell^2) \otvn \cal{M})\to\L^p(\B(\ell^2) \otvn \cal{M})$, $x \mapsto wx$ and $\Mult_{w^*,1} \co \L^p(\B(\ell^2) \otvn \cal{M}) \to \L^p(\B(\ell^2) \otvn \cal{M})$, $x \mapsto w^*x$. By the Junge--Ruan--Sherman factorization, $e_0$ is the unit of the von Neumann algebra arising from the normal $*$-homomorphism associated with $V$. Moreover, the bimodule property of $V$ implies that
\begin{equation}
\label{support-range-V}
e_0z
=ze_0
=z,\quad z \in \Ran V.
\end{equation}
In particular, the restriction $\Mult_{w,1}\co \Ran V \to w\Ran V$ is a surjective isometry whose inverse is the restriction of $\Mult_{w^*,1}$ to $w\Ran V$. The projection $R$ has range $\Ran V$, whereas the subspace that we need to complement is $w\Ran V=\Ran\widetilde T$. We therefore transport $R$ through the partial isometry $w$. Define the linear map
\begin{equation}
\label{def-de-pi}
\Pi
\ov{\mathrm{def}}{=}
\Mult_{w,1}R\Mult_{w^*,1}
\co\L^p(\B(\ell^2) \otvn \cal{M})\to\L^p(\B(\ell^2) \otvn \cal{M}).
\end{equation}
We claim that the map $\Pi$ is a projection onto $w\Ran V$. Indeed, for any $x \in \L^p(\B(\ell^2) \otvn \cal{M})$, we have $R(w^*x)\in\Ran V$, and therefore
\[
\Pi(x)
=wR(w^*x) \in w\Ran V.
\]
Conversely, let $y \in w\Ran V$. Write $y=wz$ with $z\in\Ran V$. Using \eqref{support-range-V} and the fact that $R$ is a projection onto $\Ran V$, we obtain
\[
\Pi(y)
=\Pi(wz)
\ov{\eqref{def-de-pi}}{=} wR(w^*wz)
=wR(e_0z)
\ov{\eqref{support-range-V}}{=}wR(z)
=wz
=y.
\]
Consequently, the map $\Pi$ is the identity on the subspace $w\Ran V$ and has range contained in $w\Ran V$. Hence $\Pi^2=\Pi$ and
\begin{equation}
\label{range-Pi-stabilized}
\Ran\Pi
=w\Ran V
=\Ran\widetilde T
\ov{\eqref{range-stabilized-T}}{=}
S^p(Y).
\end{equation}
It remains to check that this projection $\Pi$ is contractively decomposable. This follows from its factorization into a completely positive contraction and left multiplication operators. Indeed, by \cite[Lemma 5.1]{Arh24b} the  operators $\Mult_{w,1} \co \L^p(\B(\ell^2) \otvn \cal{M}) \to \L^p(\B(\ell^2) \otvn \cal{M})$ and $\Mult_{w^*,1} \co \L^p(\B(\ell^2) \otvn \cal{M}) \to \L^p(\B(\ell^2) \otvn \cal{M})$ are decomposable with
\[
\norm{\Mult_{w,1}}_{\dec, \L^p(\B(\ell^2) \otvn \cal{M}) \to \L^p(\B(\ell^2) \otvn \cal{M})}
\leq 1
\quad \text{and} \quad 
\norm{\Mult_{w^*,1}}_{\dec,\L^p(\B(\ell^2) \otvn \cal{M}) \to \L^p(\B(\ell^2) \otvn \cal{M})}
\leq 1.
\]
Since $R$ is completely positive and contractive, it is contractively decomposable by mimicking the proof of \cite[Proposition 3.11 p.~30]{ArK23} (stated in the semifinite case). By the submultiplicativity of the decomposable norm \cite[(3.3)]{JuR04}, we obtain
\[
\norm{\Pi}_{\dec}
\ov{\eqref{def-de-pi}}{=} \norm{\Mult_{w,1}R \Mult_{w^*,1}}_{\dec}
\leq \norm{\Mult_{w,1}}_{\dec} \norm{R}_{\dec} \norm{\Mult_{w^*,1}}_{\dec}
\leq 1.
\]
We have constructed a contractively decomposable projection onto $S^p(Y)$. The last step is to recover $Y$ itself. Since $Y$ is canonically the $(1,1)$-matrix corner of $S^p(Y)$, it suffices to compress $\Pi$ to this corner. Consider the element $q \ov{\mathrm{def}}{=} e_{11} \ot 1_{\cal M}$ of the von Neumann algebra $\B(\ell^2) \otvn \cal{M}$. Under the canonical complete order isometry
\[
j \co \L^p(\cal M) \to q\L^p(\B(\ell^2) \otvn \cal{M})q,\quad x \mapsto e_{11} \ot x,
\]
define the map $Q \co \L^p(\cal{M}) \to \L^p(\cal{M})$ by
\begin{equation}
\label{compression-Pi}
j(Qx)
=q\Pi(j(x))q,\quad x \in \L^p(\cal M).
\end{equation}
The map $j$, its inverse on $q\L^p(\B(\ell^2) \otvn \cal{M})q$, and the compression $x \mapsto qxq$ are completely positive contractions. They are therefore contractively decomposable. Combining this fact with $\norm{\Pi}_{\dec}\leq1$ gives
\begin{equation}
\label{dec-Q}
\norm{Q}_{\dec,\L^p(\cal{M}) \to \L^p(\cal{M})}
\leq 1.
\end{equation}
We claim that the map $Q \co \L^p(\cal{M}) \to \L^p(\cal{M})$ is a projection onto the subspace $Y$. First, if $x \in \L^p(\cal{M})$, then by \eqref{range-Pi-stabilized},
\[
\Pi(j(x)) \in S^p(Y).
\]
The $(1,1)$-corner of $S^p(Y)$ is precisely $j(Y)$. Thus \eqref{compression-Pi} implies that $Q(x)$ belongs to $ Y$. Conversely, if $y \in Y$, then $j(y)=e_{11}\ot y$ belongs to $S^p(Y) = \Ran \Pi$. Hence $\Pi(j(y))=j(y)$ and therefore $Q(y)=y$. We conclude that $\Ran Q=Y$ and $Q^2=Q$.


It remains to prove the equivalence between the second point and the third point.

2. $\Rightarrow$ 3.: Let $V \ov{\mathrm{def}}{=} e\cal{N}e^\perp$. Set $
z
\ov{\mathrm{def}}{=}c(e)c(e^\perp)$, where the central supports are taken in the von Neumann algebra $\cal{N}$. Since $Y \not= \{0\}$, we have $V \not= \{0\}$ and hence $z\not=0$. Any $x \in e\L^p(\cal{N})e^\perp$ satisfies $x=zx$. So
\begin{equation}
\label{corner-central-reduction}
e\L^p(\cal{N})e^\perp
=ze\L^p(z\cal{N})ze^\perp
\end{equation}
completely isometrically. 
Observe that we have
\[
c(ze)
\ov{\eqref{support-central-et-z}}{=} zc(e)
\ov{\eqref{support-central}}{=}z
\quad\text{and}\quad
c(ze^\perp)
\ov{\eqref{support-central-et-z}}{=} zc(e^\perp)
\ov{\eqref{support-central}}{=}z.
\]
Thus $ze$ and $ze^\perp$ have full central support in the von Neumann algebra $z\cal{N}$. Moreover, we have
\[
V=e\cal{N}e^\perp=ze\cal{N}ze^\perp.
\]
Indeed, the inclusion from right to left is immediate. Conversely, if $x\in e\cal{N}e^\perp$, then $c(e)x=x$ and $c(e^\perp)x=x$, since $c(e^\perp)$ is central. Hence $zx=x$, and similarly $xz=x$. So$x=zxz$ belongs to $ze\cal{N}ze^\perp$.

Now, we identify the linking von Neumann algebra of $V$. Since the projection $ze^\perp$ has full central support in the von Neumann algebra $z\cal{N}$, we have
\[
\overline{\Span (z\cal{N})(ze^\perp)(z\cal{N})}^{\w^*}=z\cal{N}.
\]
Therefore,
\[
\mathrm{M}(V)
\ov{\eqref{def-MV-NV}}{=}
\overline{\Span VV^*}^{\w^*}
=ze\, \overline{\Span (z\cal{N})(ze^\perp)(z\cal{N})}^{\w^*}ze
=ze\cal{N}e.
\]
Similarly, since $ze$ has full central support in $z\cal{N}$, we have
\[
\mathrm{N}(V)
\ov{\eqref{def-MV-NV}}{=}
\overline{\Span V^*V}^{\w^*}
=ze^\perp\cal{N}e^\perp.
\]
Thus, relative to the decomposition $z=ze+ze^\perp$, we obtain
\[
\mathrm{R}(V)
\ov{\eqref{Linking-algebra}}{=}
\begin{bmatrix}
\mathrm{M}(V)   &  V \\
V^*   &  \mathrm{N}(V) \\
\end{bmatrix}
\cong
\begin{bmatrix}
ze\cal{N}e & e\cal{N}e^\perp\\
e^\perp\cal{N}e & ze^\perp\cal{N}e^\perp
\end{bmatrix}
\simeq z\cal{N}.
\]

Let $\psi_z$ denote the restriction of $\psi$ to the von Neumann algebra $z\cal{N}$. Since $z \not= 0$ and $\psi$ is faithful, $\psi_z(z)>0$. We define a normal faithful state $\varphi$ on $\mathrm{R}(V)=z\cal{N}$ by
\[
\varphi(x)\ov{\mathrm{def}}{=}\frac{\psi_z(x)}{\psi_z(z)},\quad x\in z\cal{N}.
\]

We now make explicit the identification of the corresponding noncommutative $\L^p$-spaces. First, the central reduction by $z$ induces a canonical complete order isometry
\[
z\L^p(\cal{N},\psi)\to\L^p(z\cal{N},\psi_z)
\]
which preserves the $z\cal{N}$-bimodule structure. Moreover, by the canonical identification between the Haagerup and Kosaki realizations of noncommutative $\L^p$-spaces and by the change of weight theorem, there exists a complete order isometry $
\kappa_p\co\L^p(z\cal{N},\psi_z)\to\L^p(z\cal{N},\varphi)$ which preserves the $z\cal{N}$-bimodule structure. In particular,
\[
\kappa_p\big(ze\L^p(z\cal{N},\psi_z)ze^\perp\big)
=ze\L^p(z\cal{N},\varphi)ze^\perp.
\]
Together with \eqref{corner-central-reduction}, this yields a complete isometry
\[
e\L^p(\cal{N},\psi)e^\perp
\cong
ze\L^p(z\cal{N},\varphi)ze^\perp.
\]
Since $\mathrm{R}(V)=z\cal{N}$ and $V=ze\mathrm{R}(V)ze^\perp$, we obtain $
ze\L^p(z\cal{N},\varphi)ze^\perp
=\L^p(V,\varphi)$ completely isometrically. Hence $
e\L^p(\cal{N},\psi)e^\perp
\cong
\L^p(V,\varphi)$ completely isometrically. Composing this complete isometry with the surjective complete isometry in the second statement gives a surjective complete isometry $\L^p(V,\varphi)\to Y$. Hence the third statement holds.

3. $\Rightarrow$ 2.: Let $V$ be as in the third statement. By the definition of the linking von Neumann algebra, there exists a projection $e \in \mathrm{R}(V)$ such that $V \ov{\eqref{TRO-linking}}{=}e\mathrm{R}(V)e^\perp$. By the definition of the rectangular noncommutative $\L^p$-space, we have
\[
\L^p(V,\varphi)
=e\L^p(\mathrm{R}(V),\varphi)e^\perp
\]
completely isometrically. Taking $\cal{N} \ov{\mathrm{def}}{=} \mathrm{R}(V)$ and $\psi=\varphi$, the surjective complete isometry $T \co \L^p(V,\varphi) \to Y$ gives a surjective complete isometry $e\L^p(\cal{N},\psi)e^\perp\to Y$. Thus the second statement holds, and the proof is complete.
\end{proof}


 

\paragraph{Competing interests} The author declares that he has no competing interests.

\paragraph{Data availability} No data sets were generated during this study.

{\footnotesize

\vspace{0.2cm}

\noindent C\'edric Arhancet\\ 
\noindent 6 rue Didier Daurat, 81000 Albi, France\\
URL: \href{http://sites.google.com/site/cedricarhancet}{https://sites.google.com/site/cedricarhancet}\\
cedric.arhancet@protonmail.com\\
ORCID: 0000-0002-5179-6972 

}

\end{document}